\documentclass[12pt]{article}
\usepackage{amsmath, amsthm}
\usepackage{thmtools} %

\usepackage{enumitem}

\usepackage{amsfonts}
\usepackage{amssymb}
\usepackage{dirtytalk}
\usepackage{fixltx2e}
\usepackage{esint}
\usepackage{mathtools}

\usepackage{bm}
\usepackage[left=1in,right=1in,top=1in,bottom=1in]{geometry}
\usepackage[hyperref, dvipsnames]{xcolor}
\usepackage[colorlinks=true]{hyperref}

\hypersetup{
  colorlinks=true,
  citecolor=blue,
  linkcolor=purple!80
  }

\renewcommand\d{\mathop{}\!\mathrm{d}}
\newcommand\dx{\d x}
\newcommand\dz{\d z}

\allowdisplaybreaks

\newcommand{\grad}{\nabla}

\newcommand{\BJ}{\mathbf{J}_d}

\newcommand{\norm}[2]{\lVert #1\rVert_{#2}}
\newcommand{\weaklyto}{\rightharpoonup}

\DeclareMathOperator*{\divop}{div}

\numberwithin{equation}{section}



\newcommand{\balpha}{{\bm{\alpha}}}
\newcommand{\bphi}{{\bm{\phi}}}
\newcommand{\R}{\mathbb{R}}
\newcommand{\dist}{\mathrm{dist}}

\newcommand{\diam}[1]{\mathrm{diam}(#1)}

\newcommand{\Jfull}{J} %
\newcommand{\Jd}{J_d} %

\usepackage[nameinlink,noabbrev,capitalize]{cleveref}

\DeclareMathOperator*{\argmin}{argmin}

\newtheorem{theorem}{Theorem}[section]
\newtheorem{prop}[theorem]{Proposition}
\newtheorem{ass}[theorem]{Assumption}

\newtheorem{lemma}[theorem]{Lemma}
\newtheorem{cor}[theorem]{Corollary}

\newtheorem{remark}[theorem]{Remark}

\theoremstyle{definition}



\title{A free boundary problem driven by boundary distance in the coincidence set}

\author{Amal Alphonse\thanks{Email: \url{alphonse@wias-berlin.de}} \and Marcelo Bongarti\thanks{Weierstrass Institute for Applied Analysis and Stochastics, Anton-Wilhelm-Amo-Straße 39, 10117 Berlin. Email: \url{bongarti@wias-berlin.de}} \and Enrico Valdinoci\thanks{Department of Mathematics and Statistics, University of Western Australia,
35 Stirling Highway, WA 6009 Crawley, Australia. Email: \url{enrico.valdinoci@uwa.edu.au}}}
\date{}

\begin{document}
\maketitle

\begin{abstract}
We study a  free boundary problem of minimising a functional containing a non-local term rewarding depth into the zero phase: for $u\geqslant 0$ on a bounded, open set $\Omega\subset\mathbb R^d$, we minimise \[ J(u) = \int_\Omega \left(\frac12|\nabla u|^2 - fu\right) \;-\; \int_{\{u=0\}} F\big(\dist(x,\partial \{u=0\})\big)\,\dx. \]
This kind of functional arises, for example, from a two-membranes problem with an adhesive contact energy. We first address a well-definedness issue caused by the non-local, boundary-sensitive nature of the functional and prove existence of minimisers, establishing along the way a weak lower semicontinuity result for the non-local term. We then derive stationarity conditions for minimisers, including a variational (Euler--Lagrange type) inequality, a PDE on the positivity set, and, under a mild non-degeneracy assumption, a free boundary condition obtained via inner variations and a Danskin-type differentiation of the distance function. %
\end{abstract}
\section{Introduction} 

In this paper, we consider the minimisation of the functional\footnote{All boundaries in this paper are meant in the topological sense (not essential, or measure-theoretic, or reduced).
This technical aspect will play a role in the forthcoming
\cref{OBS:dI}.}
\begin{equation}\label{TERMS}\Jfull(u) = \int_\Omega \left(\frac 12 |\nabla u|^2 - fu\right) - \int_{\{u=0\}} F(\dist(x,\partial \{u=0\}))\dx\end{equation}
over non-negative functions $u\colon \Omega \to \mathbb{R}$ with a prescribed Dirichlet boundary data $u_0$. The first term
on the right-hand side of~\eqref{TERMS}
is the classical Dirichlet energy with forcing $f$, while the second term favours the formation of regions where $u$ vanishes. The contribution of a point in the zero phase depends on its distance to the free boundary, with $F\colon \mathbb{R} \to \mathbb{R}_+$ a non-decreasing function determining how this contribution varies with depth inside the zero region. Here, the notation $\dist(x, A) := \inf_{a \in A} |x-a|$ denotes the (unsigned) distance of the point $x$ to the set $A$. 

In this work, we will study existence of minimisers, derive stationarity conditions including a free boundary condition and provide an explicit characterisation of the free boundary condition in one dimension.

Our motivation for this problem stems from a two-membranes problem where two elastic, non-penetrating membranes $u$ and $v$ are subject to an adhesive energy on their contact set,  where the interaction energy depends on the distance from each contact point to the boundary of the coincidence set: 
\begin{equation*}
\int_\Omega \left( \frac 12 |\grad u|^2 + \frac 12 |\grad v|^2 - fu - gv \right)   -\int_{\{u=v\}}F\left({\rm dist}(x,\partial\{u=v\})\right)\dx.
\end{equation*}
Indeed, the non-local term models an energetic preference for regions in which the two membranes coincide. The present functional does not solely reward coincidence through its size but it weights each point according to its distance from the boundary of the coincidence set. As a consequence, points deep inside a coincidence region contribute more strongly than points near its boundary, so that geometrically thick and robust coincidence regions are energetically preferred over thin or highly fragmented ones, even when they have the same measure. This introduces the effect of coupling the energy to the internal structure of the coincidence set rather than only to its size. Although our motivation stems from elastic membranes, other applications could include those that involve, for example, a synchronised phase in a multiphase system, or a domain where two interacting quantities attain the same state. The two-membranes application will be explored in detail in a future work.

The functional in \eqref{TERMS} resembles the celebrated Alt--Caffarelli functional introduced in the seminal work \cite{AltCaffarelli}  which generated further related works such as \cite{AltCaffarelliFriedman, MR850615}. We refer to \cite{MR4807210} for a survey of one-phase free boundary problems covering existence and various regularity aspects. We mention here a few related works: the vectorial analogue of the Alt--Caffarelli functional is the focus in \cite{MR3827812, MR3626615}; biharmonic problems are analysed in \cite{MR4229229, MR4816111}; the recent work \cite{2025arXiv250814736A} considers borderline regularity for a class of related functionals; the paper \cite{McCurdy} focusses on a functional also consisting of a distance function, with the distance being from points in the positive phase to a given $C^{1,\alpha}$ submanifold.

Two difficulties in our problem are worth emphasising:
\begin{enumerate}[itemsep=0pt, label=(\roman*)]
    \item \textbf{well-definedness}: since we take \textit{boundaries} of level sets, one has to be careful in utilising the correct function spaces and definitions that allow for that term to be well defined;
    \item \textbf{non-smoothness}: the non-differentiability of the distance function requires a careful argument in order to derive the free boundary condition.
\end{enumerate}
Furthermore, the \textbf{non-locality} in the functional is challenging: specifically, the non-locality of the distance function implies that standard arguments from the free boundary literature cannot straightforwardly be adapted (e.g. in order to show continuity of the minimisers).

As indicated above, $J$ is not well defined over $H^1(\Omega)$, since, in general, the set $\partial\{u=0\}$ depends on the representative chosen for the Sobolev function and hence is not well defined for $H^1$-functions. For example, in 2D the functions $u \equiv 0$ and $\tilde u$, defined via
\[\tilde u(x) = \begin{cases}
0 &: x \in \Omega \setminus \mathbb{Q}^2,\\
1 &: x \in \Omega  \cap \mathbb{Q}^2,
\end{cases}
\]
can both be identified with the zero function and are such that $\{u=0\}$ and $\{\tilde u = 0\}$ agree a.e., %
yet $\partial \{u=0\} = \partial\Omega$ and $\partial\{\tilde u = 0\} = \overline\Omega$, i.e., the two representatives lead to different values of the functional. Indeed, this example also serves to demonstrate that e.g. choosing a quasi-continuous representative does not alleviate the problem as these are %
unique only up to sets of capacity zero, and two different representatives of the same function could give rise to radically different boundaries.

To allow for a well-defined problem in $H^1(\Omega)$, we define the zero set $Z\colon H^1(\Omega) \to \mathcal{P}(\mathbb{R}^d)$ by
\begin{equation}
 Z(u) = \left\{ x \in \Omega : \liminf_{r \to 0}\fint_{B_r(x)} u = 0\right\}.\label{eq:defnZ}
\end{equation}
Hence, instead of~\eqref{TERMS}, we consider (without changing notation)
\begin{equation}
\Jfull(u) :=  \int_\Omega \left(\frac 12 |\nabla u|^2 - fu  \right)- \int_{\{u=0\}} F(\dist(x,\partial Z(u)))\;\mathrm{d}x\label{eq:min_problem}
\end{equation}
and we focus on the minimisation of this functional over the
admissible set
\begin{equation}
    {K} := \{ u \in H^1(\Omega) : u \geqslant 0 \text{ in $\Omega$, }  u = u_0 \text{ on $\partial\Omega$}\}.
\end{equation}
Here, the boundary condition is meant in the sense that $u-u_0 \in H^1_0(\Omega)$. The precise assumptions that we make on the data are as follows, which we take to be standing throughout the entire work:
\begin{ass}[Standing assumptions]~
\begin{enumerate}[itemsep=0pt, label=(\roman*)]
\item  $\Omega \subset \mathbb{R}^d$ is a bounded, open set for $d \geqslant 1$; 
\item  $f \in L^2(\Omega)$;
\item  $u_0 \in H^1(\Omega)$;
\item  $F\colon \mathbb{R}_+ \to \mathbb{R}_+$ is non-decreasing and Lipschitz continuous.
\end{enumerate}
\end{ass}
Let us state the main results of our paper.
\begin{theorem}[Existence of minimisers]\label{thm:existence}
There exists a minimiser $u\in{K}$ of problem~\eqref{eq:min_problem}.
\end{theorem}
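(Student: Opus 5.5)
The plan is the direct method of the calculus of variations. The only non-standard ingredient is an upper semicontinuity property of the non-local term along sequences converging strongly in $L^2(\Omega)$.

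\textbf{Boundedness and compactness.} First I will check that $J$ is bounded from below on $K$ and coercive. For $x\in\Omega$ with $Z(u)\neq\emptyset$, the set $Z(u)\subset\Omega$ is bounded, so $\partial Z(u)\neq\emptyset$ and $\dist(x,\partial Z(u))\leqslant \diam{\Omega}$. (If $Z(u)=\emptyset$, then $\{u=0\}$ is null up to the choice of representative, which is handled below.) Since $F$ is non-decreasing, the non-local term lies in $[0, F(\diam{\Omega})|\Omega|]$. Together with Poincaré's inequality applied to $u-u_0\in H^1_0(\Omega)$ and $f\in L^2(\Omega)$, this shows that a minimising sequence $(u_n)\subset K$ is bounded in $H^1(\Omega)$. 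After passing to a subsequence, $u_n\weaklyto u$ in $H^1(\Omega)$, $u_n\to u$ in $L^2(\Omega)$ (using Rellich on $u_n-u_0$), and $u_n\to u$ a.e. The set $K$ is convex and closed, hence weakly closed, so $u\in K$. The Dirichlet term is weakly lower semicontinuous and the $fu$ term is continuous, so it remains to prove
\[
\limsup_{n\to\infty}\int_{\{u_n=0\}}F(\dist(x,\partial Z(u_n)))\dx\leqslant \int_{\{u=0\}}F(\dist(x,\partial Z(u)))\dx .
\]

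\textbf{Pointwise upper semicontinuity.} I fix a full-measure set $E\subset\Omega$ of points that are Lebesgue points of $u$ and of every $u_n$, and at which $u_n(x)\to u(x)$. For $x\in E$ we have $x\in Z(u_n)$ if and only if $u_n(x)=0$, and similarly for $u$. If $u(x)>0$, then eventually $u_n(x)>0$, so the integrand vanishes for large $n$.

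If $u(x)=0$, consider the subsequence along which $u_n(x)=0$, so that $x\in Z(u_n)$, and take any $r>\dist(x,\partial Z(u))$. The aim is to show $\partial Z(u_n)\cap B_r(x)\neq\emptyset$ for large $n$. The ball $B_r(x)$ contains a point $y\in\partial Z(u)$, and hence a point $z\notin Z(u)$ together with a small ball $B_\rho(z)\subset B_r(x)$ on which $\fint_{B_\rho(z)}u>0$. By $L^1$ convergence, $\fint_{B_\rho(z)}u_n>0$ for large $n$. Then $B_\rho(z)\not\subset Z(u_n)$, since otherwise $u_n=0$ a.e. on $B_\rho(z)$ by Lebesgue's differentiation theorem. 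So the connected set $B_r(x)$ contains a point of $Z(u_n)$, namely $x$, and a point of its complement, and therefore meets $\partial Z(u_n)$.

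This gives $\limsup_n \dist(x,\partial Z(u_n))\leqslant \dist(x,\partial Z(u))$. By continuity and monotonicity of $F$,
\[
\limsup_n \mathbf 1_{\{u_n=0\}}F(\dist(x,\partial Z(u_n)))\leqslant \mathbf 1_{\{u=0\}}F(\dist(x,\partial Z(u)))\quad\text{for a.e. } x.
\]
The integrands are bounded by $F(\diam{\Omega})$ on a bounded domain, so the reverse Fatou lemma yields the desired inequality. Hence $J(u)\leqslant\liminf_n J(u_n)=\inf_K J$.

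\textbf{Main obstacle.} The delicate step is the representative and measurability issue. It must be checked that $\{u=0\}$ can be replaced by $Z(u)$ up to a null set, and that the integrand is measurable; the latter should follow because $x\mapsto\dist(x,\partial Z(u))$ is $1$-Lipschitz. The step "a point outside $Z(u)$ forces positive averages of $u_n$ nearby" is the crux. It relies on the specific $\liminf$-average definition~\eqref{eq:defnZ}, which is exactly what makes $\partial Z(u)$ insensitive to the choice of representative.
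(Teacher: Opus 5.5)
Your proof is correct once one case is patched, and it takes a genuinely different, more elementary route than the paper.

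\textbf{The patch.} The point $z\in B_r(x)\setminus Z(u)$ obtained from $y\in\partial Z(u)$ need not lie in $\Omega$. Suppose $y\in\partial\Omega$ and $Z(u)$ fills $\Omega$ near $y$; for instance, the limit $u\equiv0$ has $\partial Z(u)=\partial\Omega$. Then there is no ball around $z$ with $\fint_{B_\rho(z)}u>0$, and $u$ is not even defined at such $z$. In that case, however, $y\notin\Omega\supset Z(u_n)$, so $y$ itself is the required point of $B_r(x)\setminus Z(u_n)$ and your connectedness argument goes through. The ball argument is only needed when $y\in\Omega$, where points of $\Omega\setminus Z(u)=P(u)$ accumulate at $y$.

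\textbf{Comparison with the paper.} The paper's route has several steps:
\begin{itemize}
\item it rewrites $\dist(x,\partial Z(u))=\dist(x,\hat P(u))$ via \cref{lem:Z_reduction};
\item it uses Egorov's theorem to get uniform convergence off a small set $E_\rho$, and introduces truncated positivity sets $P_\rho$;
\item it proves that every point of $P_\rho(u)$ eventually lies in $P_\rho(u_n)$ (\cref{LE24}), which gives the $\limsup$ inequality for the $\rho$-truncated term;
\item it lets $\rho\downarrow0$ through the nested union $\hat P(u)=\bigcup_k\hat P_{\rho_k}(u)$.
\end{itemize}
Uniform convergence is needed there because the paper insists that a specific near-minimising point $y_\delta\in\hat P_\rho(u)$ persist in $\hat P_\rho(u_n)$. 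That is a condition on $\liminf_{r\to0}$ of averages at that very point, and $L^1$ convergence cannot control it.

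Your key observation is that this is more than necessary. To bound $\dist(x,\partial Z(u_n))$ from above you only need some point of $B_r(x)$ outside $Z(u_n)$. A single fixed-radius ball with positive $u$-average supplies one under mere $L^1$ convergence. Connectedness of $B_r(x)$, together with $x\in Z(u_n)$, then converts it into a point of $\partial Z(u_n)$.

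What this buys:
\begin{itemize}
\item it removes Egorov, the sets $P_\rho$ and the $\rho\to0$ limit;
\item it handles the indicator of $\{u_n=0\}$ directly by a.e. convergence;
\item it avoids a delicate step in \cref{prop:distance_convergence}. There, $\dist(x,\bigcup_k\hat P_{\rho_k}(u))=\dist(x,\hat P(u))$ is inferred from a set identity that holds only up to null sets. Distance functions are not invariant under null modifications, so that step as written needs an extra density argument.
\end{itemize}
The paper's approach mainly has the advantage of being phrased in terms of $\hat P(u)$, the object used later in the stationarity analysis. For the existence theorem itself, your argument is shorter and self-contained.
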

The proof of this result will be given in \cref{sec:existence}, where in particular we work to establish the weak lower semicontinuity of our functional $J$. 

We note that if a minimiser $u$ of \eqref{eq:min_problem} is continuous, then $Z(u) = \{u=0\}$. In this case, we can relate our existence result to the original functional \eqref{TERMS} as follows.
\begin{cor}
If the minimiser $u$ of $J$ is continuous, then $u$ minimises the original functional
\[\int_\Omega \left(\frac12|\nabla u|^2 - fu\right) \;-\; \int_{\{u=0\}} F\big(\dist(x,\partial \{u=0\})\big)\dx\]
over $K \cap C^0(\Omega)$.
\end{cor}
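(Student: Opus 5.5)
The plan is to compare the two functionals on continuous admissible functions and then use minimality of $u$ in all of $K$. Write $\tilde J$ for the original functional in the statement. The key observation, already noted before the corollary, is that for every $v \in K \cap C^0(\Omega)$ we have $Z(v) = \{v = 0\}$, and hence $J(v) = \tilde J(v)$.

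First I would verify this identity for an arbitrary continuous $v \geqslant 0$. If $v(x) = 0$, continuity gives $\fint_{B_r(x)} v \to v(x) = 0$, so $x \in Z(v)$. Conversely, if $v(x) > 0$, continuity gives $v \geqslant v(x)/2$ on $B_r(x)$ for all small $r$. Then $\liminf_{r\to 0} \fint_{B_r(x)} v \geqslant v(x)/2 > 0$, so $x \notin Z(v)$. Two points need a little care here:
\begin{itemize}
\item The balls $B_r(x)$ lie in $\Omega$ for small $r$ because $\Omega$ is open, so the averages make sense.
\item The continuous representative is the one used in both the level set $\{v=0\}$ and the boundary $\partial\{v=0\}$.
\end{itemize}
Since $Z(v) = \{v=0\}$ as sets, their topological boundaries agree. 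The two integrands therefore coincide pointwise, and $J(v) = \tilde J(v)$.

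Next I would use that the minimiser $u$ from \cref{thm:existence} lies in $K$ and is continuous, so $u \in K \cap C^0(\Omega)$. For any $v \in K \cap C^0(\Omega) \subset K$ we get
\[
\tilde J(u) = J(u) \leqslant J(v) = \tilde J(v),
\]
which is exactly the claim.

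The only delicate step is the well-definedness issue discussed in the introduction: one must make sure that the set $\{u=0\}$ in the original functional is computed with the continuous representative. The precise-representative definition of $Z$ then removes any ambiguity, because it does not depend on the representative. Apart from this, the argument is routine.
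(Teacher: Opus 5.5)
Your proposal is correct and is essentially the paper's argument. The paper only remarks that continuity gives $Z(u)=\{u=0\}$, so $J$ agrees with the original functional on $K\cap C^0(\Omega)$, and minimality over $K$ then restricts to that subclass. Your verification of $Z(v)=\{v=0\}$ for every continuous $v\geqslant 0$ fills in exactly the detail the paper leaves implicit; the original functional is well defined there because an $H^1$ function has at most one continuous representative.
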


In analogy to the zero level set $Z(u)$ defined in \eqref{eq:defnZ}, we define also the positivity set
\begin{equation}\label{PIUde}
 P(u) = \left\{ x \in \Omega : \liminf_{r \to 0}\fint_{B_r(x)} u > 0\right\}.
 \end{equation}
As is standard in free boundary problems, we can also show that minimisers satisfy a PDE on the positivity set (i.e. non-coincidence set) in the form of the following:
\begin{prop}\label{prop:pde_solved_on_Pu}
If $u$ is a minimiser of $J$,  we have
that\begin{equation}\label{3.2.2}
-\Delta u = f  \quad \text{on $\mathrm{int}(P(u))$}.
\end{equation}
\end{prop}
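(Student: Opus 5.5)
Fix an open ball $B$ with $\overline B \subset \mathrm{int}(P(u))$ and a test function $\varphi \in C_c^\infty(B)$. We will show $\int_\Omega \nabla u\cdot\nabla\varphi - f\varphi = 0$. Since $\mathrm{int}(P(u))$ is covered by such balls, a partition of unity then gives the distributional equation $-\Delta u = f$ there. The plan is to compare $J(u)$ with $J(u+t\varphi)$ for small $|t|$, checking two things: that $u+t\varphi$ is admissible, and that the non-local term does not change.

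\textbf{Admissibility.} First we need $u+t\varphi \ge 0$ for small $|t|$, which would follow from a positive lower bound for $u$ on $\mathrm{supp}\,\varphi$. Pointwise positivity of the lower density on $P(u)$ does not immediately give such a bound, so we would use superharmonicity-type information instead. For nonnegative perturbations $t\varphi$ with $t>0$, $\varphi \ge 0$, the function $u+t\varphi$ is admissible. We would like to conclude that it leaves the non-local term unchanged, but adding positive mass can only shrink $Z$. This is the main obstacle.

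\textbf{Invariance of $Z$ and of $\{u=0\}$.} The cleaner route is to show these sets are unchanged whenever $\mathrm{supp}\,\varphi \subset B$. Since the lower density is defined by limits as $r\to 0$, the condition $x\in Z(u+t\varphi)$ for $x\notin \overline B$ depends only on $u$ near $x$, so $Z(u+t\varphi)\setminus \overline B = Z(u)\setminus\overline B$. Inside $\overline B\subset P(u)$, the set $Z(u)$ is empty. So it suffices to show $Z(u+t\varphi)\cap \overline B=\emptyset$ and that $\{u+t\varphi=0\}\cap B$ is null.

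We would obtain this by first proving that $u$ is bounded below by a positive constant on compact subsets of $\mathrm{int}(P(u))$, and we try that first as follows. Take one-sided variations $u+t\varphi$ with $t>0$, $\varphi\ge 0$. These are admissible and can only shrink $Z$ and $\{u=0\}$. Shrinking $\{u=0\}$ lowers the reward, which has the wrong sign, but inside $B$ the set $\{u=0\}$ is null because the lower density is positive at every point. Indeed, by Lebesgue differentiation, a.e. point of $\{u=0\}$ has density zero and so lies in $Z(u)$. Thus $\{u=0\}\cap B$ is null, and $Z$ can only shrink, with $Z\cap\overline B$ already empty. Because $F$ is non-decreasing, shrinking $Z$ changes $\dist(x,\partial Z)$ in a way we must control: points of $Z(u)\setminus\overline B$ keep the same membership, and $\partial Z$ outside $\overline B$ is unchanged. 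Any change of $\partial Z$ would have to occur inside $\overline B$, where $Z$ is empty both before and after. Hence $\partial Z$ is unchanged and the non-local term is identical.

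The minimality inequality $J(u)\le J(u+t\varphi)$ then yields $\int \nabla u\cdot\nabla\varphi - f\varphi \ge 0$ for all $\varphi\ge0$ supported in $B$, that is, $-\Delta u\ge f$ in $B$. So $u$ is a supersolution. By the weak Harnack inequality for $H^1$ supersolutions, together with $u\ge0$ and $u\not\equiv 0$ near each point (the lower density is positive), $\operatorname{ess\,inf} u>0$ on slightly smaller balls. Once this lower bound holds, perturbations $u+t\varphi$ with arbitrary sign are admissible for small $|t|$. They keep $u+t\varphi$ bounded below by a positive constant on $\mathrm{supp}\,\varphi$, so $Z$, $\partial Z$ and $\{u+t\varphi=0\}$ up to null sets are all unchanged. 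Differentiating $J(u+t\varphi)$ at $t=0$ gives the equation.
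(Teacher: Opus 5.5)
Your upward variation is correct and is exactly how the paper proves $-\Delta u\geqslant f$ on $\mathrm{int}(P(u))$: for $t>0$ and $\varphi\geqslant0$ supported in $B$, the set $Z$ and, up to a null set, $\{u=0\}$ do not change, so minimality gives $\int_\Omega\nabla u\cdot\nabla\varphi-f\varphi\geqslant0$. The gap is the next step, where you use the weak Harnack inequality to get a positive essential lower bound for $u$, which you need to make $u+t\varphi$ with $t<0$ admissible. The weak Harnack inequality for $-\Delta u\geqslant f$ (e.g.\ Gilbarg--Trudinger, Theorem~8.18) requires $f\in L^{s}$ with $s>d/2$. It then gives $\fint_{B_{2R}}u\leqslant C\bigl(\operatorname{ess\,inf}_{B_R}u+R^{2-d/s}\|f\|_{L^s(B_{4R})}\bigr)$. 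Under the standing assumption $f\in L^2(\Omega)$ this is available only for $d\leqslant3$. Even there, positivity holds only on balls small enough, depending on the point, for the lower density at the centre to beat the $f$-term, followed by a covering of $\mathrm{supp}\,\varphi$.

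For $d\geqslant5$ the inference itself is false. Let $u=1-\sum_k(1-\tfrac1k)\psi\bigl((x-x_k)/\delta_k\bigr)$ with $\psi\in C_c^\infty(B_1)$, $0\leqslant\psi\leqslant\psi(0)=1$, disjoint balls $B_{\delta_k}(x_k)$, $x_k\to0$ and $\delta_k\to0$ fast. Then $u\in H^1$, $u\geqslant0$, and $-\Delta u\in L^2$, since the $k$-th term has squared $L^2$-norm $\sim\delta_k^{d-4}$. Every point has positive lower density (equal to $1$ at $0$), yet $\operatorname{ess\,inf}u=0$ on every neighbourhood of $0$. So being a supersolution with positive lower density does not yield a local lower bound, and your two-sided variation is not justified in general.

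The missing idea is that no lower bound is needed. Use the truncated downward variation $w=(u-t\varphi)^+$ with $t>0$ and $\varphi\geqslant0$, which is always admissible. Since $w\leqslant u$, you have $\{w=0\}\supseteq\{u=0\}$ and $\hat P(w)\subseteq\hat P(u)$ by \cref{OAKSlK}. Because $F\geqslant0$ is non-decreasing, this gives $\Jd(w)\leqslant\Jd(u)$, so the non-local term can only help. Hence $0\leqslant D(w)-D(u)$. Dividing by $t$, using $|\nabla w|\leqslant|\nabla(u-t\varphi)|$ and $\tfrac1t\min(u,t\varphi)\to\varphi\chi_{\{u>0\}}$, yields $-\Delta u\leqslant f\chi_{\{u>0\}}$ in $\Omega$, which is \cref{P:3.2}. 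Since $\{u>0\}$ has full measure in $\mathrm{int}(P(u))$ by \cref{lem:Pu_and_u_positive_same}, this reads $-\Delta u\leqslant f$ there. Combined with your supersolution inequality, it gives the equation in every dimension.
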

See \cref{sec:stationarity} for the proof. To obtain free boundary conditions, we need a regularity assumption on the positivity set.
\begin{ass}\label{ass:measure_Pu_zero}
For a minimiser $u$ of $J$, assume that
\[
|\partial P(u)|=0.
\]
    \end{ass}
    This is a reasonable assumption and it ensures that the free boundary is not too pathological. In \cref{sec:fbc}, we shall address the following theorem.
\begin{theorem}[Free boundary condition] \label{thm:fbc}
Let $u$ be a minimiser of $J$ satisfying \cref{ass:measure_Pu_zero}. Assume also that $\partial\{u > \varepsilon\}$ is of class $C^1$ for sufficiently small $\varepsilon>0$. Define $m_u(x) =\dist(x,\partial Z(u))$. Then, for any $\bphi \in C_0^1(\Omega,\mathbb{R}^d)$,
\begin{align} 
    \lim_{\varepsilon \downarrow 0} \int_{\partial \{u>\varepsilon\}} \frac 12|\nabla u|^2  \bphi \cdot \nu & = -\int_{\{u=0\}} \left((F' \circ m_{u}) \nabla m_u \cdot \left[\bphi(x)-\bphi(x_p)\right] + (F \circ m_u)\divop \bphi(x)\right)\dx \nonumber \\ 
    &=  -\lim_{\varepsilon \to 0} \int_{\partial \{u > \varepsilon\}} (F \circ m_u) \bphi \cdot \nu + \int_{\{u=0\}} (F' \circ m_{u}) \nabla m_u \cdot \bphi(x_p)\dx\label{fb_0}  
 \end{align} 
    where we set $x_p := \argmin_{z \in \overline{P(u)} \cup \partial\Omega} |x-z|$  and $\nu = -\frac{\grad u}{|\grad u|}$ denotes the normal vector to $\partial\{u>\varepsilon\}$. 
    \end{theorem}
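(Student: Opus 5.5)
We derive the free boundary condition from \emph{inner variations}. Fix $\bphi \in C_0^1(\Omega,\mathbb{R}^d)$ and set $\Phi_t(x) = x + t\bphi(x)$, which is a $C^1$ diffeomorphism of $\Omega$ onto itself for $|t|$ small. The competitors are $u_t := u\circ\Phi_t^{-1}\in K$. Non-negativity and the boundary data are preserved since $\bphi$ has compact support. The key structural facts are:
\begin{itemize}
\item $Z(u_t)=\Phi_t(Z(u))$ and $\partial Z(u_t)=\Phi_t(\partial Z(u))$, because the liminf of ball averages transforms compatibly under a bi-Lipschitz map;
\item likewise $\{u_t=0\}=\Phi_t(\{u=0\})$.
\end{itemize}
Minimality gives $J(u_t)\ge J(u)$. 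Hence, if $t\mapsto J(u_t)$ has one-sided derivatives at $0$, the right derivative is $\ge 0$ and the left derivative is $\le 0$. Under the assumptions we aim to show these coincide, giving equality.

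\textbf{Dirichlet part.} Changing variables, $\int \tfrac12|\nabla u_t|^2 - f u_t$ differentiates in the classical way to
\[
\int_\Omega \left(\tfrac12|\nabla u|^2\divop\bphi - \nabla u\cdot D\bphi\,\nabla u\right) - \text{(forcing term)}.
\]
For the forcing term, we first assume $f$ is smooth enough or argue by density; it vanishes on $\{u=0\}$ and is handled with the PDE. To convert this to a boundary integral, we split $\Omega$ into $\{u>\varepsilon\}$ and $\{u\le\varepsilon\}$.
\begin{itemize}
\item On $\{u>\varepsilon\}\subset \mathrm{int}(P(u))$, \cref{prop:pde_solved_on_Pu} gives $-\Delta u=f$. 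By interior regularity and the $C^1$ assumption on $\partial\{u>\varepsilon\}$, the usual Pohozaev/Rellich identity $\divop\big(\tfrac12|\nabla u|^2\bphi-(\bphi\cdot\nabla u)\nabla u\big)=\tfrac12|\nabla u|^2\divop\bphi-\nabla u\cdot D\bphi\nabla u+f\,\bphi\cdot\nabla u$ integrates to boundary terms. On $\partial\{u>\varepsilon\}$ we have $\nabla u=-|\nabla u|\nu$, so $(\bphi\cdot\nabla u)(\nabla u\cdot\nu)=-|\nabla u|^2\,\bphi\cdot\nu$, and the flux becomes $-\tfrac12|\nabla u|^2\bphi\cdot\nu$.
\item The region $\{0<u\le\varepsilon\}$ shrinks to a subset of $\partial P(u)$ up to null sets, which is negligible by \cref{ass:measure_Pu_zero}. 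On $\{u=0\}$, $\nabla u=0$ a.e.
\end{itemize}
Dominated convergence then yields the left-hand side of \eqref{fb_0} as $\varepsilon\downarrow0$.

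\textbf{Non-local part, the main obstacle.} Write $\int_{\{u_t=0\}}F(\dist(y,\partial Z(u_t)))\,dy=\int_{\{u=0\}}F(m_t(x))\det D\Phi_t(x)\,dx$, where
\[
m_t(x)=\dist(\Phi_t(x),\Phi_t(\partial Z(u)))=\min_{z\in \partial Z(u)\cup\partial\Omega}|x-z+t(\bphi(x)-\bphi(z))|,
\]
with $\partial\Omega$ included harmlessly since $\bphi=0$ there. The determinant contributes $(F\circ m_u)\divop\bphi$.

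The difficult step is differentiating $m_t$. We use Danskin's theorem on the compact set of candidate points $z$. The one-sided derivative at $t=0$ is
\[
\min_{z\in\operatorname{argmin}} \frac{x-z}{|x-z|}\cdot(\bphi(x)-\bphi(z)).
\]
At points where the minimiser is unique, i.e.\ where $m_u$ is differentiable, which is a.e.\ by Rademacher since $m_u$ is Lipschitz, this equals $\nabla m_u\cdot(\bphi(x)-\bphi(x_p))$. Here we identify the nearest point on $\partial Z(u)$ with the nearest point $x_p$ in $\overline{P(u)}\cup\partial\Omega$ for $x\in\{u=0\}$ up to null sets; this uses \cref{ass:measure_Pu_zero} again. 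The difference quotients are bounded by $2\|\bphi\|_{C^1}$ times the Lipschitz constant of $F$, so dominated convergence applies. The derivative is then two-sided a.e., which gives the first equality of \eqref{fb_0}; where $F$ is not differentiable, $F'\circ m_u$ should be read via the chain rule for Lipschitz functions.

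\textbf{Second equality.} We integrate by parts: $-\int_{\{u=0\}}(F\circ m_u)\divop\bphi - \int (F'\circ m_u)\nabla m_u\cdot\bphi=-\int_{\{u=0\}}\divop\big((F\circ m_u)\bphi\big)$. We approximate $\{u=0\}$ by the complements of the $C^1$ sets $\{u>\varepsilon\}$, again using that the difference sets have vanishing measure. The divergence theorem, with the outward normal of $\{u\le\varepsilon\}$ equal to $-\nu$ measured appropriately, then gives the boundary term, and the remaining $\bphi(x_p)$ term is kept as stated.
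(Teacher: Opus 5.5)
Your route is the paper's: inner variations $x+t\bphi$, transport of $P(u)$ and $Z(u)$ under the bi-Lipschitz map, Danskin's theorem with a.e.\ uniqueness of the nearest point, a uniform difference-quotient bound, the Rellich--Pohozaev identity with the divergence theorem on $\{u>\varepsilon\}$, and the product rule for the second line. The problem is the last step: ``measured appropriately'' hides a sign that does not come out as claimed. Since $(F\circ m_u)\bphi$ is Lipschitz with compact support in $\Omega$, and the outward normal of $\{u\le\varepsilon\}$ is $-\nu$, your argument gives
\[
-\int_{\{u=0\}}\divop\big((F\circ m_u)\bphi\big)\dx=\lim_{\varepsilon\downarrow0}\int_{\partial\{u>\varepsilon\}}(F\circ m_u)\,\bphi\cdot\nu .
\]
This is the second line of \eqref{fb_0} with a \emph{plus} sign on the boundary term. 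No orientation consistent with $\nu=-\nabla u/|\nabla u|$ in the first line turns it into the stated minus. The plus sign is the correct one. For $F\equiv c>0$ (the Alt--Caffarelli case), the first line gives $\tfrac12|\nabla u|^2=c$ on the free boundary, while the stated second line would give $\tfrac12|\nabla u|^2=-c$. In general the two versions differ by twice the boundary term. The paper's proof slips at exactly the same substitution, so the defect is in the statement. Your write-up should derive and state the corrected sign rather than assert the displayed one.

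There is also a technical gap. Saying that $F'\circ m_u$ ``should be read via the chain rule for Lipschitz functions'' does not give what dominated convergence needs: existence of $\frac{d}{dt}F(m_t(x))|_{t=0}$ for a.e.\ $x\in\{u=0\}$. This fails where $m_u(x)$ lies in the null set $N$ on which $F$ is not differentiable and $\frac{d}{dt}m_t(x)\neq0$. You must show $|\{x\in\{u=0\}: m_u(x)\in N\}|=0$, which the paper gets from $|\nabla m_u|=1$ a.e.\ on $\{u=0\}$ and the coarea formula. The assumption $|\partial P(u)|=0$ is genuinely needed in two places. One is this step. The other is to ensure $m_u>0$ a.e.\ on $\{u=0\}$, so that $\partial_t|x-z+t(\bphi(x)-\bphi(z))|$ is jointly continuous on the compact candidate set for small $|t|$, as Danskin requires. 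By contrast, identifying nearest points on $\partial Z(u)$ with $x_p$ holds for every $x\in Z(u)$ without the assumption. Finally, there is a sign typo in the Rellich step: $(\bphi\cdot\nabla u)(\nabla u\cdot\nu)=+|\nabla u|^2\,\bphi\cdot\nu$. That is what makes the flux equal to $-\tfrac12|\nabla u|^2\bphi\cdot\nu$, as you state.
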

This formula is rather implicit because it contains the point $x_p$, which is essentially a closest point projection onto the set $\overline{P(u)}\cup\partial\Omega$. This set admits a direct interpretation: it is the closure of every point at which the zero phase $Z(u)$ is obstructed from extending --- either because $u$ becomes positive there, or because the domain itself ends there. Its interior, $P(u)\cup\partial\Omega$ (which we shall later define as $\hat P(u)$), will indeed be essential to our analysis.

We can write the free boundary condition more explicitly in dimension $d=1$ in a setting where the positivity set is a union of disjoint intervals. In this way we characterise the 1D setting with a \textit{pointwise} free boundary condition.

\begin{cor}[Pointwise free boundary condition, $d=1$]\label{cor:1D_FB}
Assume $\Omega=(0,1)$. Let \cref{ass:measure_Pu_zero} hold for a minimiser $u \in C^1(\overline\Omega)$ of $J$, and suppose that
\[
\{u>0\}=\bigcup_{i=1}^k(a_i,b_i)
\]
is a finite union of disjoint intervals with $0< a_1 < b_1 < \dots < a_k < b_k < 1$. 
Then, we have the following pointwise free boundary conditions for $i=1, \dots, k$:
\begin{align*}
\frac12u'(a_i)^2 &=F\left(\dfrac{a_i-b_{i-1}}{2}\right),\\
\frac12u'(b_i)^2 &=F\left(\dfrac{a_{i+1}-b_i}{2}\right),
\end{align*}
where $b_0:=0$ and $a_{k+1} :=1$.
\end{cor}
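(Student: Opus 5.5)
We specialise the second line of \eqref{fb_0} in \cref{thm:fbc} to $d=1$. We test against a field $\bphi \in C^1_0(\Omega)$ supported in a small neighbourhood of one free boundary point, say $a_i$, so that no other endpoint lies in the support.

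\textbf{Preparation.} Since $u \in C^1(\overline\Omega)$, we have $Z(u)=\{u=0\}$ and $P(u)=\{u>0\}$. The zero phase is therefore
\[
[0,a_1]\cup[b_1,a_2]\cup\dots\cup[b_k,1],
\]
and $\overline{P(u)}\cup\partial\Omega = \{0,1\}\cup\bigcup_i[a_i,b_i]$. The boundary $\partial Z(u)$ consists of the $a_i$ and $b_i$; note that $0$ and $1$ are not in the topological boundary of $Z(u)$ relative to the ambient space, because $Z(u)\subset\Omega$ and its closure contains $0$ and $1$.

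We must fix a convention carefully here. The formula in \cref{thm:fbc} measures distance to $\partial Z(u)$, and the stated answer $F((a_i-b_{i-1})/2)$ with $b_0=0$ shows that $0$ must count as a boundary point, i.e.\ $\partial Z(u)$ is taken in $\mathbb{R}$. Then $0\in\partial Z(u)$ and $m_u(x)=\dist(x,\{b_{i-1},a_i\})$ on each gap $[b_{i-1},a_i]$. This is the tent function with peak $(a_i-b_{i-1})/2$ at the midpoint, and $|m_u'|=1$ away from the midpoint. Likewise, $x_p$ is the nearest endpoint, and $\bphi(x_p)$ vanishes unless $x_p=a_i$.

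\textbf{Left-hand side.} In 1D, $\partial\{u>\varepsilon\}$ near $a_i$ is a single point $a_i^\varepsilon\downarrow a_i$. The outward normal of $\{u>\varepsilon\}$ there is $\nu=-1$, which matches $-u'/|u'|$ when $u'(a_i)>0$; if $u'(a_i)=0$, both sides of the identity will turn out to agree anyway. The boundary integral becomes $\tfrac12u'(a_i^\varepsilon)^2\phi(a_i^\varepsilon)(-1)$, which tends to $-\tfrac12u'(a_i)^2\phi(a_i)$ by continuity of $u'$. Similarly,
\[
\lim_{\varepsilon\to0}\int_{\partial\{u>\varepsilon\}}(F\circ m_u)\bphi\cdot\nu=-F(m_u(a_i))\phi(a_i)=-F(0)\phi(a_i).
\]
Since $u\in C^1$, the $\{u>\varepsilon\}$ sets are $C^1$ (points) for small $\varepsilon$, and \cref{ass:measure_Pu_zero} holds trivially, so \cref{thm:fbc} applies.

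\textbf{Right-hand side.} We evaluate $\int_{\{u=0\}}F'(m_u)\,m_u'\,\phi(x_p)\,dx$. Only the half-gap $[c,a_i]$ with $c=(b_{i-1}+a_i)/2$ contributes, since there $x_p=a_i$ and elsewhere $\phi(x_p)=0$ for a narrowly supported $\phi$. On this half-gap $m_u(x)=a_i-x$, so $m_u'=-1$, and the integrand is $-F'(a_i-x)\phi(a_i)$. This integrates to $-\phi(a_i)\bigl(F(m_c)-F(0)\bigr)$ with $m_c=(a_i-b_{i-1})/2$, where we use that $F$ is Lipschitz and hence absolutely continuous. At the endpoint $b_0=0$ the same computation holds with $m_c=a_1/2$, thanks to the convention above; this point is the main subtlety, and we would check it against the paper's definition of $m_u$ and $x_p$.

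\textbf{Conclusion at $a_i$.} Assembling the pieces,
\[
-\tfrac12u'(a_i)^2\phi(a_i)=F(0)\phi(a_i)-\phi(a_i)\bigl(F(m_c)-F(0)\bigr).
\]
At first sight this looks off by the $F(0)$ terms. Recomputing with the signs of the second line of \eqref{fb_0}, the right-hand side is $-(-F(0)\phi(a_i))$ plus the integral, which gives $F(0)\phi-\phi F(m_c)+\phi F(0)$.

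This sign bookkeeping is the main obstacle, so we would redo it with the first line of \eqref{fb_0} instead. That line reads
\[
-\int\bigl(F'(m_u)m_u'(\phi(x)-\phi(x_p))+F(m_u)\phi'\bigr).
\]
Integrating the term $F(m_u)\phi'$ by parts over $[b_{i-1},a_i]$ gives $F(0)\phi(a_i)-\int F'(m_u)m_u'\phi$. The $\phi(x)$ terms cancel, leaving $-F(0)\phi(a_i)+\int F'(m_u)m_u'\phi(x_p)=-F(0)\phi(a_i)-\phi(a_i)(F(m_c)-F(0))=-F(m_c)\phi(a_i)$. Equating with the left-hand side and choosing $\phi(a_i)\neq0$ yields $\tfrac12u'(a_i)^2=F\bigl((a_i-b_{i-1})/2\bigr)$.

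\textbf{The case $b_i$.} Here the normal is $\nu=+1$, $m_u=x-b_i$ on $[b_i,(b_i+a_{i+1})/2]$ so $m_u'=+1$, and the integration by parts boundary term flips sign as well. The same computation gives $\tfrac12u'(b_i)^2=F\bigl((a_{i+1}-b_i)/2\bigr)$, with $a_{k+1}=1$ handled exactly as the endpoint $0$.
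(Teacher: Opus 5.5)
Your final computation with the first line of \eqref{fb_0} is correct and is essentially the paper's proof. The paper integrates the exact derivative $\frac{d}{dx}\bigl(F(m_u)(\bphi-\bphi(x_p))\bigr)$ over each half-gap, sums over gaps and then localises, which amounts to your localise-then-integrate-by-parts step. No endpoint convention is needed at $0,1$: $m_u=\dist(x,P(u)\cup\partial\Omega)$ by \cref{lem:Z_reduction}, and $0,1\in\partial Z(u)$ in any case. The $F(0)$ mismatch you found with the second line is not a slip on your part. Since $-\int_{\{u=0\}}\divop\bigl((F\circ m_u)\bphi\bigr)=+\lim_{\varepsilon\downarrow0}\int_{\partial\{u>\varepsilon\}}(F\circ m_u)\bphi\cdot\nu$, the boundary term in that line should carry a plus sign, and with that correction it also gives $-F\bigl(\tfrac{a_i-b_{i-1}}{2}\bigr)\bphi(a_i)$.
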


\section{Structural properties of the functional and of the zero and positivity sets}

In this section, we collect some basic features of the problem under consideration. Recall again the zero and positivity sets defined in \eqref{eq:defnZ} and \eqref{PIUde}.
 \begin{lemma}
     Suppose $u \in H^1(\Omega)$ with $u \geqslant 0$ a.e. Then, $\Omega = P(u) \cup Z(u)$.
 \end{lemma}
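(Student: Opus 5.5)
The plan is to show that for every $x \in \Omega$ the quantity
\[\ell(x) := \liminf_{r\to 0}\fint_{B_r(x)} u\]
is a well-defined element of $[0,\infty]$ that is either zero or strictly positive. Then $x$ belongs to $Z(u)$ in the first case and to $P(u)$ in the second. The inclusion $P(u)\cup Z(u)\subseteq\Omega$ holds by definition, since both sets are defined as subsets of $\Omega$. So only the reverse inclusion needs an argument.

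First, fix $x\in\Omega$. Since $\Omega$ is open, there is $r_0>0$ with $B_r(x)\subset\Omega$ for all $r<r_0$. Because $u\in H^1(\Omega)\subset L^1_{\mathrm{loc}}(\Omega)$, each average $\fint_{B_r(x)} u$ is a finite real number for $r<r_0$. Hence the liminf as $r\to0$ is meaningful. A priori it takes values in $[-\infty,\infty]$.

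Second, use $u\geqslant 0$ a.e. This gives $\fint_{B_r(x)}u\geqslant 0$ for every $r<r_0$, and therefore $\ell(x)\in[0,\infty]$. The value $+\infty$ is allowed and counts as $\ell(x)>0$. By trichotomy on $[0,\infty]$, exactly one of $\ell(x)=0$ or $\ell(x)>0$ holds. In the first case $x\in Z(u)$, and in the second $x\in P(u)$. Thus $\Omega\subseteq P(u)\cup Z(u)$, and the union is in fact disjoint.

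There is no real obstacle here. The only points needing care are that the averages are defined for small $r$, which uses openness of $\Omega$ and local integrability of $u$, and that the nonnegativity assumption rules out a negative liminf. Without that assumption, points with $\ell(x)<0$ would lie in neither set. The possibly infinite liminf is also harmless, since it is included in the condition $\ell(x)>0$.
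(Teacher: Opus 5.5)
Your proof is correct and is essentially the paper's argument. The paper phrases it by contradiction: a point lying in neither set would have $\liminf_{r\to0}\fint_{B_r(x)}u<0$, which is impossible since $u\geqslant 0$. You argue directly via the trichotomy on $[0,\infty]$, and you add the harmless extra care that the averages are defined for small $r$ and that the liminf may equal $+\infty$.
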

 \begin{proof}
Since both~$P(u)$ and~$Z(u)$ are subsets of~$\Omega$,
one inclusion is trivial. For the other, we argue by contradiction: if $x \in \Omega$ but $x \not\in P(u) \cup Z(u)$, then $\liminf_{r \to 0} \fint_{B_r(x)}u < 0$, but this cannot be, because~$u \geqslant 0$. 
 \end{proof}

It is useful to observe that the zero and positivity sets $Z(u)$ and $P(u)$ indeed capture, essentially, the sets in which~$u$ is zero and positive respectively. More precisely, we have that:

\begin{lemma}\label{lem:Pu_and_u_positive_same}
    $P(u) = \{ u > 0\}$ and $Z(u) = \{u = 0\}$, up to sets of measure zero.
\end{lemma}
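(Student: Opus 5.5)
The plan is to use the Lebesgue differentiation theorem. Since $u \in H^1(\Omega) \subset L^1_{\mathrm{loc}}(\Omega)$, there is a set $N \subset \Omega$ with $|N| = 0$ such that every $x \in \Omega \setminus N$ is a Lebesgue point of $u$. Fix a representative of $u$. Enlarging $N$ by another null set, we may also assume $u(x) \geqslant 0$ for $x \notin N$, and that at such points
\[
\lim_{r\to 0}\fint_{B_r(x)} u = u(x).
\]
The balls $B_r(x)$ lie in $\Omega$ for small $r$, since $\Omega$ is open, so the averages make sense.

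At any $x \in \Omega\setminus N$ the $\liminf$ in the definitions \eqref{eq:defnZ} and \eqref{PIUde} is an honest limit, and it equals $u(x)$. This gives the two equivalences:
\begin{itemize}
\item $x \in P(u)$ if and only if $u(x) > 0$;
\item $x \in Z(u)$ if and only if $u(x) = 0$.
\end{itemize}
Hence
\[
P(u) \,\triangle\, \{u>0\} \subset N
\qquad\text{and}\qquad
Z(u) \,\triangle\, \{u=0\} \subset N,
\]
and both symmetric differences have measure zero, which is the claim.

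Two routine points need checking. First, the sets $P(u)$ and $Z(u)$ do not depend on the chosen representative, since ball averages are unchanged by null modifications. Second, $\{u>0\}$ and $\{u=0\}$ change only by null sets when the representative changes. So the statement "up to sets of measure zero" is representative-independent, as it should be.

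I do not expect a genuine obstacle. The only point needing care is measurability of $P(u)$ and $Z(u)$. For the identification "up to null sets" this is harmless, because each set differs from a measurable set by a subset of a null set, and Lebesgue measure is complete. If needed, one can also note directly that $x \mapsto \fint_{B_r(x)} u$ is continuous for each fixed $r$. The $\liminf$ can then be taken along rational $r$, up to a standard continuity-in-$r$ argument, which makes $P(u)$ and $Z(u)$ Borel.
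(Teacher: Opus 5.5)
Your proposal is correct and follows the same route as the paper: by the Lebesgue differentiation theorem the $\liminf$ of ball averages is an honest limit equal to $u(x)$ off a null set, so membership in $P(u)$ and $Z(u)$ coincides with $u(x)>0$ and $u(x)=0$ there. Your additional remarks on representative-independence and on measurability via completeness of Lebesgue measure are valid details that the paper leaves implicit.
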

\begin{proof}
    By the Lebesgue differentiation theorem, $$\liminf_{r \to 0}\fint_{B_r(x)} u = \lim_{r \to 0}\fint_{B_r(x)} u = u(x)$$ for almost every $x \in \Omega$, hence the two sets above agree a.e.
\end{proof}
\begin{cor}\label{lem:Pu_cap_u_zero_measure_zero}
    The sets $\{u > 0\} \cap Z(u)$ and $\{u = 0\} \cap P(u)$ have measure zero.
\end{cor}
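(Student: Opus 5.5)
This is a direct consequence of \cref{lem:Pu_and_u_positive_same}. We fix a representative of $u$; the sets $\{u>0\}$ and $\{u=0\}$ are then defined up to null sets, and we can assume $u\geqslant 0$ everywhere. By the Lebesgue differentiation theorem there is a null set $N\subset\Omega$ such that, for every $x\in\Omega\setminus N$,
\[
\lim_{r\to0}\fint_{B_r(x)}u=u(x).
\]
In particular, at such points the liminf in the definitions \eqref{eq:defnZ} and \eqref{PIUde} is an actual limit and equals $u(x)$.

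\emph{First set.} Take $x\in(\{u>0\}\cap Z(u))\setminus N$. Since $x\in Z(u)$, the liminf of the averages is $0$, so $u(x)=0$. This contradicts $u(x)>0$. Hence $\{u>0\}\cap Z(u)\subset N$, which is a null set.

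\emph{Second set.} Take $x\in(\{u=0\}\cap P(u))\setminus N$. Since $x\in P(u)$, the liminf of the averages is positive, so $u(x)>0$. This contradicts $u(x)=0$. Hence $\{u=0\}\cap P(u)\subset N$ as well.

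Equivalently, each of the two sets lies in the symmetric difference of two sets that \cref{lem:Pu_and_u_positive_same} shows agree up to measure zero: $\{u>0\}\cap Z(u)\subset Z(u)\setminus\{u=0\}$ and $\{u=0\}\cap P(u)\subset P(u)\setminus\{u>0\}$.

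The only subtle point is measurability. The argument shows that both sets are contained in the Lebesgue-null set $N$, so they are Lebesgue measurable with measure zero by completeness of Lebesgue measure. Changing the representative of $u$ alters each set only by a null set, and $Z(u)$ and $P(u)$ do not depend on the representative at all, so the conclusion holds for every representative.
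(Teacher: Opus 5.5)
Your proof is correct and follows essentially the same route as the paper. The paper cites \cref{lem:Pu_and_u_positive_same} to get $P(u)\subseteq\{u>0\}\cup N$ with $N$ null, then intersects with $\{u=0\}$ (and argues similarly for $Z(u)$); you inline the Lebesgue differentiation argument behind that lemma and also note the equivalent inclusion into the sets where $P(u)$, $Z(u)$ differ from $\{u>0\}$, $\{u=0\}$.
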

\begin{proof}
    By \cref{lem:Pu_and_u_positive_same}, we know that~$P(u)\subseteq\{u > 0\}\cup N$, with $N$ of measure zero. Consequently, $$
    \{u = 0\}\cap P(u)\subseteq \big( \{u = 0\}\cap\{u > 0\}\big)\cup N=N,$$
    as desired. A similar argument proves the statement for the zero level set.
\end{proof}

\subsection{Well-definedness}
We show here that $J$ is well defined. 

\begin{lemma}
Suppose $u \in H^1(\Omega)$ with $u \geqslant 0$ a.e., and let $v$ be equal to $u$ almost everywhere in $\Omega$. Then,
$Z(u)=Z(v)$ and $P(u)=P(v)$.\end{lemma}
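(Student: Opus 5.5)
The plan is to observe that the sets $Z(u)$ and $P(u)$ are defined only through the ball averages $\fint_{B_r(x)} u$, for $x\in\Omega$ and small $r>0$. These averages are integrals, so they cannot distinguish between representatives of the same equivalence class. The key step is to fix $x \in \Omega$ and note that for all $r$ with $0<r<\dist(x,\partial\Omega)$ the ball $B_r(x)$ lies inside $\Omega$, so $\fint_{B_r(x)} u$ is well defined. Since $u=v$ a.e. in $\Omega$, we have $\int_{B_r(x)} u = \int_{B_r(x)} v$, because the Lebesgue integral does not see null sets. Dividing by $|B_r(x)|$ gives
\[
\fint_{B_r(x)} u = \fint_{B_r(x)} v \qquad\text{for all } 0<r<\dist(x,\partial\Omega).
\]

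Next, the $\liminf_{r\to 0}$ only depends on the values for arbitrarily small $r$. Hence the two functions $r\mapsto \fint_{B_r(x)}u$ and $r\mapsto\fint_{B_r(x)}v$ coincide on a punctured neighbourhood of $0$, and therefore
\[
\liminf_{r \to 0}\fint_{B_r(x)} u=\liminf_{r \to 0}\fint_{B_r(x)} v
\]
for every $x\in\Omega$, as elements of $[0,\infty]$. Membership of $x$ in $Z(\cdot)$ is the condition that this quantity equals $0$, and membership in $P(\cdot)$ is the condition that it is positive. So $x\in Z(u)$ if and only if $x\in Z(v)$, and likewise for $P$. This gives $Z(u)=Z(v)$ and $P(u)=P(v)$ as sets, with no exceptional null set, since the argument is pointwise in $x$.

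There is no real obstacle here. The only point deserving a remark is integrability: $u\in H^1(\Omega)\subset L^1_{\mathrm{loc}}(\Omega)$ and $v=u$ a.e., so both averages are finite. The hypothesis $u\geqslant 0$ a.e. is used only to know that the $\liminf$ is non-negative. This places every $x$ in exactly one of $Z(u)$ or $P(u)$, consistent with the earlier lemma that $\Omega=P(u)\cup Z(u)$. As a consequence, $\partial Z(u)$, and hence the non-local term in \eqref{eq:min_problem}, is independent of the chosen representative.
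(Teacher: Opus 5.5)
Your proposal is correct and follows the paper's proof: both note that the ball averages of $u$ and $v$ agree for every $x$ and every small $r$ because integrals ignore null sets, so the $\liminf$'s, and hence membership in $Z$ and $P$, agree pointwise. Your restriction to $r<\mathrm{dist}(x,\partial\Omega)$, so that the balls stay inside $\Omega$, and your integrability remark are small extra points of care that the paper's one-line argument leaves implicit.
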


\begin{proof} We have that, for all~$r>0$,
\[ \fint_{B_r(x)} u=\fint_{B_r(x)} v,\]
from which the desired result follows plainly.\end{proof}

\subsection{Reformulation of the distance functional} %
In this section, we will reformulate the functional in a more convenient form by rewriting the distance to the boundary of the zero level set in terms of the positivity set. First, we recall a useful observation on the distance function.\footnote{Here it is important that $\partial A$ is not given the subspace topology.
}

\begin{lemma}\label{OBS:dI}
Let $A \subset \mathbb{R}^d $ be nonempty. Then, for all $x \in A^c$,
$$\dist(x,A) = \dist(x,\partial A).$$
\end{lemma}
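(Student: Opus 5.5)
Since $\partial A\subseteq \overline{A}$ and $\dist(x,\overline A)=\dist(x,A)$, we have $\dist(x,A)\leqslant \dist(x,\partial A)$ trivially. (If $\partial A$ were empty, then $A$ would be clopen and nonempty in $\mathbb{R}^d$, so $A=\mathbb{R}^d$ and $A^c=\emptyset$; the claim is then vacuous.) The whole task is therefore the reverse inequality $\dist(x,\partial A)\leqslant \dist(x,A)$ for $x\in A^c$. Here it matters that the boundary is taken in $\mathbb{R}^d$, not in a subspace topology, so that we can use connectedness of segments in $\mathbb{R}^d$.

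The plan is to show that for every $a\in A$, the closed segment $[x,a]$ meets $\partial A$ at a point $z$ with $|x-z|\leqslant |x-a|$. Taking the infimum over $a\in A$ then gives $\dist(x,\partial A)\leqslant \dist(x,A)$. To find $z$, parametrise $\gamma(t)=x+t(a-x)$ for $t\in[0,1]$ and set
\[
t^*:=\inf\{t\in[0,1]:\gamma(t)\in \overline{A}\}.
\]
This set is nonempty because it contains $t=1$, and it is closed since $\overline A$ is closed, so $z:=\gamma(t^*)\in\overline A$. Next we check that $z\notin \mathrm{int}(A)$. If $t^*=0$, then $z=x\in A^c$, so $z$ is not interior. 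If $t^*>0$, then $\gamma(t)\notin\overline A$ for $t<t^*$, so every neighbourhood of $z$ contains points of $A^c$, and again $z$ is not interior. Hence $z\in\overline A\setminus\mathrm{int}(A)=\partial A$, and $|x-z|=t^*|x-a|\leqslant|x-a|$.

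No step is really an obstacle. The only care needed is the case $t^*=0$, which happens exactly when $x\in\partial A$; there the claim reads $0=0$ and the argument above already covers it.
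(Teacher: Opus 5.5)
Your proof is correct, but it takes a different route from the paper. The paper splits into the cases $x\in\partial A$, where both sides vanish, and $x\in\mathrm{int}(A^c)$. In the second case it takes the largest open ball $B_r(x)\subset A^c$, with $r=\dist(x,A)$, and asserts that its bounding sphere meets $\partial A$. That assertion tacitly uses three facts, none of which the paper spells out: a nearest point of $\overline A$ to $x$ exists (by compactness of closed bounded sets in $\mathbb{R}^d$), such a point lies in $\partial A$, and $B_r(x)\subset\mathrm{int}(A^c)$ does not meet $\partial A$.

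You instead separate the two inequalities. You get the easy one from $\partial A\subseteq\overline A$, and the other from a first-hitting-time argument along the segment from $x$ to an arbitrary $a\in A$. This treats $x\in\partial A$ and $x\in\mathrm{int}(A^c)$ uniformly and makes every step explicit. It needs no nearest-point or compactness argument, only that the hitting-time set in $[0,1]$ is closed, so it works verbatim in any normed space.

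What the paper's ball picture gives in exchange, once the compactness step is filled in, is that $\dist(x,\partial A)$ is actually attained at a point of $\partial A$. Your infimum argument does not directly yield this, but the lemma does not need it.
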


\begin{proof} We distinguish two cases, either $x\in \partial A$ or~$x$ lies
in the interior of~$A^c$.

In the first case, there exists a sequence of points~$a_k\in A$ such that~$a_k\to x$ as $k\to+\infty$. Therefore,
\[0\leqslant\dist(x,A)\leqslant\lim_{k\to+\infty}|x-a_k|=0,\]
showing that~$\dist(x,A)=\dist(x,\partial A)=0$, as desired.

If instead $x$ lies
in the interior of~$A^c$, we can find~$r\in(0,+\infty)$ such that~$B_r(x)\subset A^c$ and~$\partial B_r(x)\cap \partial A$ is nonempty.
Thus, $r=\dist(x,A) = \dist(x,\partial A)$. 
\end{proof}

The next observation is structurally very useful.
\begin{lemma}\label{lem:Z_reduction}
For every $x\in Z(u)$,
\[
\dist(x,\partial Z(u)) = \dist(x, P(u) \cup \partial\Omega).
\]
\end{lemma}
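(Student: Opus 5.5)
The plan is to reduce the statement to \cref{OBS:dI}, applied to the complement of $Z(u)$ in $\mathbb{R}^d$. First I will record that $P(u)$ and $Z(u)$ are disjoint: for a given $x$ the quantity $\liminf_{r\to0}\fint_{B_r(x)}u$ cannot be both $0$ and $>0$. Combined with the lemma $\Omega=P(u)\cup Z(u)$, this gives the disjoint decomposition
\[
\mathbb{R}^d\setminus Z(u) \;=\; P(u)\,\cup\,(\mathbb{R}^d\setminus\Omega) =: A.
\]
The set $A$ is nonempty, since $\Omega$ is bounded and so $\mathbb{R}^d\setminus\Omega\neq\emptyset$.

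Next I use that topological boundaries are invariant under complementation, so $\partial Z(u)=\partial A$. For $x\in Z(u)=A^c$, \cref{OBS:dI} then gives
\[
\dist(x,\partial Z(u))=\dist(x,\partial A)=\dist(x,A)=\min\bigl\{\dist(x,P(u)),\ \dist(x,\mathbb{R}^d\setminus\Omega)\bigr\},
\]
with the convention $\dist(x,\emptyset)=+\infty$ if $P(u)=\emptyset$. Here it matters, as the footnote stresses, that $\partial$ is taken in $\mathbb{R}^d$ and not relative to $\Omega$.

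It then remains to show $\dist(x,\mathbb{R}^d\setminus\Omega)=\dist(x,\partial\Omega)$ for $x\in\Omega$. I will apply \cref{OBS:dI} once more, this time with $A=\mathbb{R}^d\setminus\Omega$: this set is nonempty, its boundary is $\partial\Omega$, and $x\in\Omega=A^c$. Since the distance to a union is the minimum of the distances, combining the two displays yields $\dist(x,\partial Z(u))=\dist(x,P(u)\cup\partial\Omega)$.

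The argument is essentially bookkeeping. The only point needing care is the first step, namely that $Z(u)^c$ is exactly $P(u)\cup\Omega^c$. This uses both the covering $\Omega=P(u)\cup Z(u)$, which relies on $u\geqslant 0$, and the disjointness of $P(u)$ and $Z(u)$. I will state both explicitly rather than leave them implicit.
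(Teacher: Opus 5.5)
Your proof is correct and follows essentially the same route as the paper: both identify $\mathbb{R}^d\setminus Z(u)$ with $P(u)\cup(\mathbb{R}^d\setminus\Omega)$, use that $\partial Z(u)$ is the boundary of this complement, and apply \cref{OBS:dI}. The differences are cosmetic. The paper splits $\mathbb{R}^d\setminus\Omega$ into $\partial\Omega$ and the exterior and discards the exterior term by a direct geometric argument, whereas you obtain $\dist(x,\mathbb{R}^d\setminus\Omega)=\dist(x,\partial\Omega)$ by a second application of \cref{OBS:dI}. You also make explicit the disjointness of $P(u)$ and $Z(u)$ and the role of $u\geqslant 0$, which the paper uses tacitly.
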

\begin{proof}
 Define $N:=\mathbb R^d\setminus Z(u)$. Then we have
 \begin{align*}
N &= \mathbb{R}^d\setminus (\Omega\setminus P(u)) = (\mathbb{R}^d\setminus \Omega)\cup P(u)  = P(u)\cup\partial\Omega\cup(\mathbb{R}^d\setminus\overline{\Omega}).
\end{align*}
This set is nonempty since $\partial\Omega\neq\emptyset$. Since the distance to a union is the minimum of the distances to each component,
\[
\dist(x,N) = \min\Big(\dist(x,P(u)),\ \dist(x,\partial\Omega),\ \dist\big(x,\mathbb R^d\setminus\overline\Omega\big)\Big).
\]
The third term is not less than $\dist(x,\partial\Omega)$ (because the distance of a point in the set to the boundary cannot be greater than distance to the exterior) so it  can be dropped, leading to
\begin{equation}\label{CSLMM}
\dist(x,N) = \min\big(\dist(x,P(u)),\ \dist(x,\partial\Omega)\big).
\end{equation}
Furthermore, since 
the boundary of a set is invariant under complementation, we have that
$\partial N=\partial(N^c)=\partial Z(u)$.
Thus,
given $x\in Z(u)=N^c$, \cref{OBS:dI} gives that $\dist(x,\partial Z(u))=\dist(x,N)$, which combined with~\eqref{CSLMM} proves that 
\[\dist(x,\partial Z(u)) = \min\big(\dist(x,\partial\Omega),\ \dist(x,P(u))\big).\]
Using again that the distance to a union of sets equals the minimum of the distances to each set, we get the result. 
\end{proof}
This result is quite helpful because it allows us to write our functional $J$ in \eqref{eq:min_problem} as
\[ \Jfull(u) =  \int_\Omega \left(\frac 12 |\nabla u|^2 - fu\right)  - \int_{\{u=0\}} F(\dist(x, P(u) \cup \partial\Omega))\;\mathrm{d}x.\]
In the next sections we will be mainly working with this reformulation.
For shortness, we define
\begin{equation}
    \hat P(u) := P(u) \cup \partial\Omega.\label{eq:hatP}
\end{equation}

\begin{lemma}
For a.e. $x \in \{u=0\}$,
\begin{equation}\label{CL1}
\dist(x, \partial P(u)) = \dist(x, P(u)).\end{equation}
Also,\footnote{The distance to the empty set is defined to be~$\infty$: in this spirit,
both equations~\eqref{CL1}
and~\eqref{C2} read ``$\infty=\infty$'' when $u \equiv 0$. }
\begin{equation}\label{C2}
\int_{\{u=0\}}F(\dist(x, \partial P(u)))\dx = \int_{\{u=0\}}F(\dist(x, P(u)))\dx.\end{equation}
\end{lemma}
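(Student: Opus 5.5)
The plan is to apply \cref{OBS:dI} with $A = P(u)$ at every point of $\{u=0\}$ that lies outside $P(u)$, and then to show that the remaining points form a null set. \cref{OBS:dI} gives, for every $x \in P(u)^c$, the identity $\dist(x,P(u)) = \dist(x,\partial P(u))$, provided $P(u)\neq\emptyset$. So \eqref{CL1} holds at every $x\in\{u=0\}\setminus P(u)$, and the only possible exceptional set is $\{u=0\}\cap P(u)$. By \cref{lem:Pu_cap_u_zero_measure_zero}, this set has Lebesgue measure zero. This proves \eqref{CL1} for a.e.\ $x\in\{u=0\}$ whenever $P(u)$ is nonempty.

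Next I would handle the degenerate case $P(u)=\emptyset$ separately. Then $\partial P(u)=\emptyset$ as well, so with the stated convention both sides of \eqref{CL1} equal $\infty$ for every $x$, and \eqref{CL1} holds trivially. This covers the case $u\equiv 0$ mentioned in the footnote. If $P(u)$ is empty but $u\not\equiv 0$, then $|\{u>0\}|=|P(u)|=0$ by \cref{lem:Pu_and_u_positive_same}, so $u=0$ a.e.\ and we are again in the trivial situation.

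Finally, \eqref{C2} follows by integrating. The integrands $F(\dist(x,\partial P(u)))$ and $F(\dist(x,P(u)))$ agree for a.e.\ $x\in\{u=0\}$, so the two integrals coincide as integrals of nonnegative functions, whether finite or $+\infty$. When the distances are $\infty$, the meaning of $F(\infty)$ has to be read with the same convention on both sides; since the integrands are then identical, this is harmless. Measurability of both integrands is not an issue: distance functions to sets are $1$-Lipschitz (or identically $\infty$), and $F$ is continuous.

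I do not expect a genuine obstacle here. The only points needing care are that \cref{OBS:dI} requires its set to be nonempty, which forces the separate treatment of $P(u)=\emptyset$, and that the equality in \eqref{CL1} fails only on the null set $\{u=0\}\cap P(u)$, which \cref{lem:Pu_cap_u_zero_measure_zero} controls.
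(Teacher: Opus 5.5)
Your proof is correct and follows the paper's argument: you apply \cref{OBS:dI} with $A=P(u)$ at points of $\{u=0\}\setminus P(u)$, discard the null set $\{u=0\}\cap P(u)$ via \cref{lem:Pu_cap_u_zero_measure_zero}, and integrate the a.e.\ equality of the integrands. You are slightly more careful than the paper's write-up in two places: you treat $P(u)=\emptyset$ separately, where \cref{OBS:dI} does not apply, and you restrict the a.e.\ equality of the integrands to $\{u=0\}$, whereas the paper claims it on all of $\Omega$, which fails on $\mathrm{int}\,P(u)$ where the two distances genuinely differ.
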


\begin{proof} We let $A:=P(u)$.
By \cref{lem:Pu_cap_u_zero_measure_zero}, we know that
almost every~$x\in \{u=0\}$ belongs to~$A^c$ and thus \eqref{CL1}
is a consequence of \cref{OBS:dI}.

Now we remark that, if $f=g$ a.e. in $\Omega$, then
$$ \{x\in\Omega {\mbox{ s.t. }} f(x)= g(x)\}
\subseteq
\{x\in\Omega {\mbox{ s.t. }} F(f(x))= F(g(x))\}$$
and so, passing to complementary sets,
$$ \{x\in\Omega {\mbox{ s.t. }} F(f(x))\ne F(g(x))\}\subseteq
\{x\in\Omega {\mbox{ s.t. }} f(x)\ne g(x)\}.$$
In particular, choosing~$f(x):=\dist(x, \partial P(u))$ and $g(x): = \dist(x, P(u))$, we find that the set
$$ \{x\in\Omega {\mbox{ s.t. }} F(\dist(x, \partial P(u)))\ne F(\dist(x, P(u)))\}$$
has measure zero, thanks to \eqref{CL1}.

On this account,  the set
$$ \{x\in\Omega {\mbox{ s.t. }} F(\dist(x, \partial P(u)))= F(\dist(x, P(u)))\}$$has full measure in~$\Omega$.
The claim in~\eqref{C2} then follows by integration.
\end{proof}

\begin{remark}    
It is natural to ask what happens if one were to consider in $J$ the distance
\[
\dist\big(x,\ \partial\{u=0\}\cap\Omega\big)
\]
in place of $\dist(x,\partial\{u=0\})$.
In this respect, since
\[
\partial\{u=0\} \cap \Omega \;=\; \partial\{u>0\} \cap \Omega,
\]
one has that
$$ \int_{\{u=0\}}F(\dist(x,\partial\{u=0\}\cap\Omega))\dx= \int_{\{u=0\}} F(\dist(x,\partial\{u>0\}\cap\Omega))\dx.$$
Note that $\partial\{u>0\}\cap\Omega$ can be empty even when $\{u>0\}\neq\emptyset$ (e.g. if $\{u>0\}$ is the entire $\Omega$, or connected components of~$\Omega$, and the boundary conditions allow it), and it is certainly empty when $u\equiv 0$. 

In such a case $\dist(x,\partial\{u=0\}\cap\Omega)=+\infty$ for every $x\in\Omega$, and the corresponding term in the functional is ill-defined.
 This clarifies the role of the ambient (non-intersected) boundary $\partial\{u=0\}$ used throughout the paper: it is precisely the retention of the $\partial\Omega$-contribution  that guarantees the uniform bound $\dist(x,\partial\{u=0\})\leqslant \diam{\Omega}$ for every admissible $u$, including $u\equiv0$. 
 
 One could possibly alleviate these problems by assuming, for example, that $F$ is bounded at $\infty$ or by treating
 separately the case~$u\equiv0$,
but the setting considered in this paper avoids these additional complications and allows a unified treatment.
\end{remark}
 
\section{Existence of minimisers}\label{sec:existence}
In this section, we will prove existence of minimisers by establishing a weak lower semicontinuity result for the functional.

\subsection{Approximating problem}\label{sec:approximating_problem}
To prove weak lower semicontinuity of $J$, given a weakly convergent sequence $u_n \weaklyto u$, one would like to compare $\dist(x,P(u_n))$ and $\dist(x,P(u))$ directly, but weak convergence alone need not give enough control on the positivity sets to do so. We instead work outside a small exceptional set on which $u_n\to u$ uniformly, obtained via Egorov's theorem, and this uniform control is what makes the comparison possible. For this purpose, we work with the following set up.

Given $u \in H^1(\Omega)$, let us take a sequence $u_n \weaklyto u$ in $H^1(\Omega)$, strongly in $L^2(\Omega)$ and $u_n, u \geqslant 0$. By Egorov's theorem, for every $\rho$ there exists $E_\rho \subset \Omega$ with \begin{equation}\label{JISKND1}|E_\rho| \leqslant \rho\end{equation} and 
\begin{equation}\label{JISKND}
    {\mbox{$u_n \to u$ uniformly in $\Omega\setminus E_\rho$.}}\end{equation} Furthermore, we can assume without loss of generality that if
$\sigma < \rho$ then $ E_\sigma \subset E_\rho$,
i.e., that the exceptional sets are nested. Now, define
\begin{equation}\label{DEPR}
P_\rho(u) = \left\{ x \in \Omega : \liminf_{r \to 0} \frac{1}{|B_r(x)|}\int_{B_r(x)\setminus E_\rho} u > 0\right\}\end{equation}
and note that we are dividing by the measure of $B_r(x)$, not the area of integration. 

\begin{lemma}\label{LE24}
There exists $N_0(\rho,x) \in \mathbb{N}$ such that if $x \in P_\rho(u)$ then $x \in P_\rho(u_n)$ for $n \geqslant N_0(\rho,x)$.
\end{lemma}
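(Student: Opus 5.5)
Fix $\rho>0$ and $x\in P_\rho(u)$. By the definition \eqref{DEPR},
\[
\ell:=\liminf_{r\to0}\frac{1}{|B_r(x)|}\int_{B_r(x)\setminus E_\rho}u>0 .
\]
The idea is to compare the truncated averages of $u_n$ and $u$ uniformly in $r$, using the uniform convergence \eqref{JISKND} on $\Omega\setminus E_\rho$. Since $x\in\Omega$ is an interior point, there is $r_0>0$ with $B_{r_0}(x)\subset\Omega$, and only radii $r<r_0$ matter for the $\liminf$.

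Set $\delta_n:=\sup_{\Omega\setminus E_\rho}|u_n-u|$ (an essential supremum if we work with equivalence classes). By \eqref{JISKND}, $\delta_n\to0$. For every $r<r_0$,
\[
\left|\frac{1}{|B_r(x)|}\int_{B_r(x)\setminus E_\rho}u_n-\frac{1}{|B_r(x)|}\int_{B_r(x)\setminus E_\rho}u\right|
\leqslant \frac{|B_r(x)\setminus E_\rho|}{|B_r(x)|}\,\delta_n\leqslant\delta_n .
\]
This estimate relies on the normalisation by $|B_r(x)|$ rather than by the measure of the integration domain, which keeps the prefactor at most $1$ uniformly in $r$. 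Taking $\liminf_{r\to0}$ gives
\[
\liminf_{r\to0}\frac{1}{|B_r(x)|}\int_{B_r(x)\setminus E_\rho}u_n\geqslant \ell-\delta_n .
\]

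Choose $N_0(\rho,x)$ such that $\delta_n<\ell/2$ for all $n\geqslant N_0$; this is possible because $\delta_n\to 0$. Then the left-hand side above is at least $\ell/2>0$, so $x\in P_\rho(u_n)$ for all $n\geqslant N_0$. The index $N_0$ depends on $x$ only through $\ell$, and on $\rho$ through $E_\rho$, which matches the stated dependence $N_0(\rho,x)$.

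The main subtlety is making the uniform-convergence estimate hold uniformly in $r$, so that it passes through the $\liminf$. This works because the error bound $\delta_n$ does not depend on $r$. The possibility $\ell=+\infty$ causes no problem, since then $\ell-\delta_n=+\infty$ and any $N_0$ works. Nonnegativity of $u_n$ is not needed for this argument.
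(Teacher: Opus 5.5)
Your proof is correct and follows essentially the same route as the paper: bound the difference of the truncated averages by $\frac{|B_r(x)\setminus E_\rho|}{|B_r(x)|}\,\|u_n-u\|_{L^\infty(\Omega\setminus E_\rho)}\leqslant\|u_n-u\|_{L^\infty(\Omega\setminus E_\rho)}$ uniformly in $r$, pass to the $\liminf$, and take $N_0$ so that this error is below a fixed fraction of the positive $\liminf$ for $u$. Your remarks that the case $\ell=+\infty$ causes no difficulty and that nonnegativity of $u_n$ is not used are accurate small additions.
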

\begin{proof}
Let $x \in P_\rho(u)$ and set $$ L(x):= \liminf_{r  \to 0} \frac{1}{|B_r(x)|}\int_{B_r(x)\setminus E_\rho} u.$$ By~\eqref{DEPR}, we know that $L(x) > 0.$ Hence, we can pick $\varepsilon(x)\in(0, L(x))$. We have that
\begin{align*}
    \int_{B_r(x)\setminus E_\rho} u_n =     \int_{B_r(x)\setminus E_\rho}( u + u_n - u )\geqslant \int_{B_r(x)\setminus E_\rho} u - |B_r(x)\setminus E_\rho|\norm{u_n - u}{L^\infty(B_r(x)\setminus E_\rho)}
\end{align*}
and thus
\begin{equation}\label{EGR1}
\frac{1}{|B_r(x)|}    \int_{B_r(x)\setminus E_\rho} u_n \geqslant \frac{1}{|B_r(x)|}\int_{B_r(x)\setminus E_\rho} u - \frac{|B_r(x)\setminus E_\rho|}{|B_r(x)|}\norm{u_n - u}{L^\infty(B_r(x)\setminus E_\rho)}.
\end{equation}
Now, in light of \eqref{JISKND}, let us take $n \geqslant N_0(\rho,x)$ such that 
$$\norm{u_n - u}{L^\infty(B_r(x)\setminus E_\rho)}\leqslant 
\norm{u_n - u}{L^\infty(\Omega\setminus E_\rho)}\leqslant\frac{\varepsilon(x)}2.$$
Using this and the fact that~$|B_r(x)\setminus E_\rho|\leqslant|B_r(x)|$, we deduce from~\eqref{EGR1} that
\[\frac{1}{|B_r(x)|}    \int_{B_r(x)\setminus E_\rho} u_n \geqslant \frac{1}{|B_r(x)|}\int_{B_r(x)\setminus E_\rho} u - \frac{\varepsilon(x)}{2}.
\]
Taking the liminf here, we obtain 
\[\liminf_{r \to 0}\frac{1}{|B_r(x)|} \int_{B_r(x)\setminus E_\rho} u_n \geqslant L(x) - \frac{\varepsilon(x)}{2}>0,\]
as desired.
\end{proof}

Recall that  $\hat P(u) = P(u) \cup \partial\Omega$. We define similarly 
\[\hat P_\rho(u) := P_\rho(u) \cup \partial\Omega.\]

\begin{prop}\label{PRO25}
 For all $\varepsilon>0$ sufficiently small we have
\[\displaystyle\limsup_{n\to\infty}\int_{\{u<\varepsilon\}\setminus E_\rho} F(\dist(x,\hat P_\rho(u_n)))\dx \leqslant \int_{\{u<\varepsilon\}\setminus E_\rho} F(\dist(x,\hat P_\rho(u)))\dx.\]
 \end{prop}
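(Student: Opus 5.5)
We argue by reverse Fatou: the integrands are bounded uniformly, so it suffices to prove a pointwise limsup inequality.

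\textbf{Uniform bound.} Since $\partial\Omega\subset\hat P_\rho(v)$ for every $v$, we have $\dist(x,\hat P_\rho(u_n))\leqslant \dist(x,\partial\Omega)\leqslant \diam{\Omega}$ for all $x\in\Omega$ and all $n$. Because $F$ is continuous and non-decreasing, the integrands are bounded by $F(\diam{\Omega})$. The domain $\Omega$ is bounded, so a constant is an integrable dominating function on $\{u<\varepsilon\}\setminus E_\rho$. The reverse Fatou lemma then gives
\[
\limsup_{n\to\infty}\int_{\{u<\varepsilon\}\setminus E_\rho} F(\dist(x,\hat P_\rho(u_n)))\dx\leqslant \int_{\{u<\varepsilon\}\setminus E_\rho}\limsup_{n\to\infty} F(\dist(x,\hat P_\rho(u_n)))\dx .
\]

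\textbf{Pointwise inequality.} It remains to show, for every $x\in\Omega$,
\[
\limsup_{n\to\infty}\dist(x,\hat P_\rho(u_n))\leqslant \dist(x,\hat P_\rho(u)).
\]
Since $F$ is continuous and monotone, this yields $\limsup_n F(\dist(x,\hat P_\rho(u_n)))\leqslant F(\dist(x,\hat P_\rho(u)))$, which concludes the proof.

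Fix $x$ and $\delta>0$. Choose $y\in \hat P_\rho(u)$ with $|x-y|\leqslant \dist(x,\hat P_\rho(u))+\delta$; such a $y$ exists because $\hat P_\rho(u)\supseteq\partial\Omega\neq\emptyset$. There are two cases.

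If $y\in\partial\Omega$, then $y\in\hat P_\rho(u_n)$ for every $n$, so $\dist(x,\hat P_\rho(u_n))\leqslant |x-y|$ for all $n$.

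If $y\in P_\rho(u)$, then \cref{LE24} gives $y\in P_\rho(u_n)$ for all $n\geqslant N_0(\rho,y)$. Hence $\dist(x,\hat P_\rho(u_n))\leqslant |x-y|$ eventually.

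In both cases $\limsup_n\dist(x,\hat P_\rho(u_n))\leqslant \dist(x,\hat P_\rho(u))+\delta$. Letting $\delta\downarrow0$ gives the pointwise claim.

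\textbf{Where the difficulty sits.} \cref{LE24} is the substantive input. It only gives eventual membership of a single point, with $N_0$ depending on that point, and the argument uses it only in this pointwise form.

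The inclusion of $\partial\Omega$ in $\hat P_\rho$ is what keeps the distances finite and uniformly bounded, even when $P_\rho(u_n)$ or $P_\rho(u)$ is empty. This is what makes the reverse Fatou step legitimate.

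The restriction to $\{u<\varepsilon\}\setminus E_\rho$ plays no role in this step. Any measurable subset of $\Omega$ would work, so the statement in fact holds for every $\varepsilon>0$, not just small ones. The restriction is presumably needed only later in the paper.
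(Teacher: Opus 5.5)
Your proof is correct and follows essentially the same route as the paper. Both arguments prove the pointwise bound $\limsup_n \dist(x,\hat P_\rho(u_n))\leqslant \dist(x,\hat P_\rho(u))$ via a near-minimising point $y$ and \cref{LE24}, then apply $F$ and a reverse Fatou argument. You are slightly more careful than the paper: you state the uniform bound $F(\diam{\Omega})$ that justifies reverse Fatou, and you treat the case $y\in\partial\Omega$ separately, where \cref{LE24} is neither needed nor literally applicable. Your remark that the smallness of $\varepsilon$ is never used is also accurate.
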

\begin{proof}
Define
\[
I_n(x) := F \circ \dist(x, \hat P_\rho(u_n)) \, \chi_{\{u < \varepsilon\}\setminus E_\rho}(x), \quad
I(x) := F \circ \dist(x, \hat P_\rho(u)) \, \chi_{\{u < \varepsilon\}\setminus E_\rho}(x).
\]
If $x \in P_\rho(u)$, then, by \cref{LE24}, for $n \geqslant N_0(\rho,x)$, we have that~$x \in P_\rho(u_n)$, and clearly this property is preserved when taking an union with $\partial\Omega$, so that, on the one hand, if $x \in \hat P_\rho(u)$,
\[\dist(x, \hat P_\rho(u_n)) = 0 = \dist(x, \hat P_\rho(u))\quad{\mbox{ for all }} n \geqslant N_0(\rho,x).\]
On the other hand suppose that $x \notin \hat P_\rho(u)$. Recall
\[
\dist(x, \hat P_\rho(u)) = \inf_{z \in \hat P_\rho(u)} |x - z|.
\]
By definition of the infimum, for any $\delta > 0$, we can find $y_\delta(x) \in \hat P_\rho(u)$ such that
\[
|x - y_\delta(x)| \leqslant \dist(x, \hat P_\rho(u)) + \delta.
\]
By \cref{LE24}, there exists $N_1(\rho,y_\delta(x))$ such that
\[
y_\delta(x) \in \hat P_\rho(u_n) \quad \text{for all } n \geqslant N_1(\rho,y_\delta(x)).
\]
Hence, for $n \geqslant N_1(\rho,y_\delta(x))$,
\[
\dist(x, \hat P_\rho(u_n)) = \inf_{z \in \hat P_\rho(u_n)} |x - z| \leqslant |x - y_\delta(x)| \leqslant \dist(x, \hat P_\rho(u)) + \delta.
\]
Taking the limsup as $n \to \infty$, and then using that $\delta > 0$ was arbitrary, we conclude
\[
\limsup_{n\to\infty} \dist(x, \hat P_\rho(u_n)) \leqslant \dist(x, \hat P_\rho(u)).
\]
Therefore, applying $F$ to the above inequality, 
\begin{equation}\label{CNl}
    \limsup_{n\to\infty} I_n(x) \leqslant I(x).
\end{equation}
Now we apply Fatou's lemma:
\[
\limsup_{n\to\infty} \int_\Omega I_n \leqslant \int_\Omega \limsup_{n\to\infty} I_n \leqslant \int_\Omega I,
\]
with the final inequality due to \eqref{CNl}.
\end{proof}

\begin{prop}\label{prop:wlsc_like_rho}
    For each $\rho  >0$,  we have
 $$\displaystyle\liminf_{n\to\infty}-\int_{\{u_n=0\}\setminus E_\rho} F(\dist(x,\hat P_\rho(u_n)))\dx \geqslant -\int_{\{u=0\}\setminus E_\rho} F(\dist(x,\hat P_\rho(u)))\dx.$$
\end{prop}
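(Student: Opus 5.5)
The plan is to reduce the statement to \cref{PRO25} by enlarging the domain of integration from $\{u_n=0\}$ to a fixed sublevel set $\{u<\varepsilon\}$ of the limit, and then to let $\varepsilon\downarrow 0$. Multiplying by $-1$, the claim is equivalent to
\[
\limsup_{n\to\infty}\int_{\{u_n=0\}\setminus E_\rho} F(\dist(x,\hat P_\rho(u_n)))\dx \leqslant \int_{\{u=0\}\setminus E_\rho} F(\dist(x,\hat P_\rho(u)))\dx .
\]

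\textbf{Step 1 (localising the zero sets).} Fix $\varepsilon>0$ small enough that \cref{PRO25} applies. By the uniform convergence \eqref{JISKND}, there is $N(\varepsilon,\rho)$ such that $\norm{u_n-u}{L^\infty(\Omega\setminus E_\rho)}<\varepsilon$ for all $n\geqslant N(\varepsilon,\rho)$. For such $n$, every $x\in\{u_n=0\}\setminus E_\rho$ satisfies $u(x)\leqslant u_n(x)+|u(x)-u_n(x)|<\varepsilon$. This holds up to a null set, which is harmless for the integrals. Hence
\[
\{u_n=0\}\setminus E_\rho\subseteq\{u<\varepsilon\}\setminus E_\rho \quad\text{a.e., for } n\geqslant N(\varepsilon,\rho).
\]

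\textbf{Step 2 (monotonicity of the integral and \cref{PRO25}).} Since $F\geqslant 0$, Step 1 gives, for $n\geqslant N(\varepsilon,\rho)$,
\[
\int_{\{u_n=0\}\setminus E_\rho} F(\dist(x,\hat P_\rho(u_n)))\dx\leqslant \int_{\{u<\varepsilon\}\setminus E_\rho} F(\dist(x,\hat P_\rho(u_n)))\dx .
\]
Taking the $\limsup$ in $n$ and applying \cref{PRO25} bounds the left-hand side of the target inequality by $\int_{\{u<\varepsilon\}\setminus E_\rho} F(\dist(x,\hat P_\rho(u)))\dx$. This bound holds for every sufficiently small $\varepsilon>0$.

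\textbf{Step 3 (letting $\varepsilon\downarrow0$).} Because $u\geqslant0$, the sets $\{u<\varepsilon\}\setminus E_\rho$ decrease to $\{u=0\}\setminus E_\rho$ as $\varepsilon\downarrow 0$. The integrand does not depend on $\varepsilon$. It is uniformly bounded, since $\partial\Omega\subseteq\hat P_\rho(u)$ gives $\dist(x,\hat P_\rho(u))\leqslant\dist(x,\partial\Omega)\leqslant\diam{\Omega}$. As $F$ is non-decreasing, the integrand is therefore at most $F(\diam{\Omega})$, and $\Omega$ is bounded. Dominated convergence (or continuity from above of the finite measure $F(\dist(\cdot,\hat P_\rho(u)))\dx$) then yields convergence to $\int_{\{u=0\}\setminus E_\rho}F(\dist(x,\hat P_\rho(u)))\dx$. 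This proves the claim.

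\textbf{Main obstacle.} I expect the only delicate point to be Step 1. The inclusion of zero sets must be justified using the uniform convergence on $\Omega\setminus E_\rho$, which holds only for fixed representatives and up to a null set. It is also essential that $\varepsilon$ is fixed before $n\to\infty$, so that \cref{PRO25} is applied on an $n$-independent domain. Everything after that is monotonicity and a routine limit.
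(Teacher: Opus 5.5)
Your proposal is correct and follows the paper's proof step for step: the inclusion $\{u_n=0\}\setminus E_\rho\subseteq\{u<\varepsilon\}\setminus E_\rho$ for large $n$ comes from the uniform convergence off $E_\rho$, then monotonicity of the integral and \cref{PRO25} are applied after taking the $\limsup$, and dominated convergence handles $\varepsilon\downarrow0$. Your extra details (the a.e.\ caveat in Step 1 and the explicit bound $F(\diam{\Omega})$ via $\partial\Omega\subseteq\hat P_\rho(u)$) spell out what the paper leaves implicit.
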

\begin{proof}
Note that for every $\varepsilon >0 $, there exists $\bar N$ such that $n \geqslant \bar N$ implies $\{u_n=0\} \setminus E_\rho \subset \{ u < \varepsilon \} \setminus E_\rho$ uniformly in $x$, thanks to the uniform convergence outside $E_\rho$. Then we have, for such $n$,
    \begin{align*}
        \int_{\{u_n=0\}\setminus E_\rho} F(\dist(x, \hat P_\rho(u_n)))\dx \leqslant         \int_{\{u < \varepsilon\}\setminus E_\rho} F(\dist(x, \hat P_\rho(u_n)))  
    \dx.\end{align*}
    Now take the limsup as $n \to \infty$ and use \cref{PRO25} on the right-hand side to get
    \begin{align*}
        \limsup_{n \to \infty}\int_{\{u_n=0\}\setminus E_\rho} F(\dist(x, \hat P_\rho(u_n)))\dx \leqslant      \int_{\{u < \varepsilon\}\setminus E_\rho} F(\dist(x, \hat P_\rho(u)) )\dx
    \end{align*}    
and then the limit as $\varepsilon \to  0$ and use the dominated convergence theorem on the right-hand side above to find
        \begin{align*}
        \limsup_{n \to \infty}\int_{\{u_n=0\}\setminus E_\rho} F(\dist(x, \hat P_\rho(u_n)))\dx  \leqslant \int_{\{u =0\}\setminus E_\rho} F(\dist(x, \hat P_\rho(u)))\dx, 
    \end{align*}
which is precisely the statement.

\end{proof}

\subsection{Passing to the limit in \texorpdfstring{$\rho$}{rho} and weak lower semicontinuity}

In this section we discuss the asymptotics as~$\rho\downarrow0$
and we use it to deduce the lower semicontinuity of the functional. ~%

\begin{lemma}\label{lem:Prho_monotone}
    We have $P_\rho(u) \subset P(u)$ and hence $\dist(x, P(u)) \leqslant \dist(x, P_\rho(u)).$
    
    In addition, $\hat P_\rho(u) \subset \hat P(u)$ and hence $\dist(x, \hat P(u)) \leqslant \dist(x, \hat P_\rho(u)).$
\end{lemma}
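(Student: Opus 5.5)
The plan is to prove the inclusion $P_\rho(u) \subset P(u)$ directly from the definitions \eqref{DEPR} and \eqref{PIUde}, using only that $u \geqslant 0$. The remaining claims will then follow from general facts about distance functions.

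\textbf{Step 1: the inclusion.} Fix $x \in P_\rho(u)$. Because $u \geqslant 0$ a.e., for every $r>0$ we have
\[
\frac{1}{|B_r(x)|}\int_{B_r(x)\setminus E_\rho} u \;\leqslant\; \frac{1}{|B_r(x)|}\int_{B_r(x)} u = \fint_{B_r(x)} u .
\]
Here it matters that \eqref{DEPR} divides by $|B_r(x)|$ and not by $|B_r(x)\setminus E_\rho|$; otherwise the comparison could fail. Taking $\liminf_{r\to 0}$ on both sides gives
\[
0 < \liminf_{r\to0}\frac{1}{|B_r(x)|}\int_{B_r(x)\setminus E_\rho} u \leqslant \liminf_{r\to0}\fint_{B_r(x)} u ,
\]
so $x \in P(u)$.

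\textbf{Step 2: monotonicity of the distance.} If $A \subset B$, then $\dist(x,B) = \inf_{z\in B}|x-z| \leqslant \inf_{z \in A}|x-z| = \dist(x,A)$, since the infimum is taken over a larger set. Applying this with $A = P_\rho(u)$ and $B = P(u)$ gives the first distance inequality. The convention $\dist(x,\emptyset)=\infty$ covers the case where $P_\rho(u)$ is empty.

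\textbf{Step 3: the hatted sets.} Taking the union with $\partial\Omega$ on both sides of the inclusion from Step 1 gives $\hat P_\rho(u) = P_\rho(u)\cup\partial\Omega \subset P(u)\cup\partial\Omega = \hat P(u)$. The same monotonicity argument as in Step 2 then yields $\dist(x,\hat P(u)) \leqslant \dist(x,\hat P_\rho(u))$.

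There is no real obstacle here. The only point needing care is the normalisation in \eqref{DEPR}, together with the non-negativity of $u$, which is what makes discarding the integral over $E_\rho$ an inequality in the right direction.
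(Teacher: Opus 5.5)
Your proposal is correct and follows the paper's proof: both compare $\frac{1}{|B_r(x)|}\int_{B_r(x)\setminus E_\rho} u$ with $\fint_{B_r(x)} u$ using $u \geqslant 0$ and take the $\liminf$. You make explicit the monotonicity of $\dist(x,\cdot)$ under inclusion and the union with $\partial\Omega$, which the paper leaves as ``the result follows.''
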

\begin{proof}
    Since $B_r(x)\setminus E_\rho \subset B_r(x)$ and $u \geqslant 0$, we have
    \begin{align*}
        \fint_{B_r(x)} u = \frac{1}{|B_r(x)|} \int_{B_r(x)} u \geqslant \frac{1}{|B_r(x)|}\int_{B_r(x)\setminus E_\rho} u,
    \end{align*}
    which, after taking the liminf, is positive if $x \in P_\rho(u)$. The result follows.
\end{proof}

\begin{lemma}\label{nested}
    If $\sigma < \rho$ then $P_\rho(u) \subset P_\sigma(u)$ and $\hat P_\rho(u)\subset\hat P_\sigma(u)$.
\end{lemma}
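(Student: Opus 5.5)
The statement follows directly from the nestedness of the Egorov exceptional sets, $E_\sigma \subset E_\rho$ for $\sigma<\rho$, which we fixed when setting up the approximating problem in \cref{sec:approximating_problem}. The plan is to compare the integrands pointwise, pass to the $\liminf$ in the definition \eqref{DEPR}, and then take the union with $\partial\Omega$.

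First, fix $x\in\Omega$ and $r>0$. Since $E_\sigma\subset E_\rho$, we have $B_r(x)\setminus E_\rho \subset B_r(x)\setminus E_\sigma$. Because $u\geqslant 0$, this gives
\[
\frac{1}{|B_r(x)|}\int_{B_r(x)\setminus E_\sigma} u \;\geqslant\; \frac{1}{|B_r(x)|}\int_{B_r(x)\setminus E_\rho} u .
\]
The normalisation is the same on both sides, namely $|B_r(x)|$ and not the measure of the domain of integration. This is exactly why the comparison is monotone; had we normalised by $|B_r(x)\setminus E_\rho|$, the inequality could fail. This is the only point where care is needed.

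Next, take $\liminf_{r\to0}$ on both sides. This is monotone, so if $x\in P_\rho(u)$, the right-hand side has a positive $\liminf$. Hence the left-hand side does too, which means $x\in P_\sigma(u)$. This proves $P_\rho(u)\subset P_\sigma(u)$, by the same argument as in \cref{lem:Prho_monotone}.

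Finally, take the union with $\partial\Omega$ on both sides. This preserves inclusion and gives $\hat P_\rho(u)\subset\hat P_\sigma(u)$.

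As a by-product, useful for the subsequent limit $\rho\downarrow0$, we also record $\dist(x,\hat P_\sigma(u))\leqslant\dist(x,\hat P_\rho(u))$. This holds because the distance to a larger set is smaller.

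No step here is a genuine obstacle. The argument rests only on the nested choice of the $E_\rho$ and the non-negativity of $u$.
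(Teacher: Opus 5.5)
Your proposal is correct and follows essentially the paper's own proof: use the nested choice $E_\sigma\subset E_\rho$ and $u\geqslant 0$ to get the inequality between the integrals over $B_r(x)\setminus E_\sigma$ and $B_r(x)\setminus E_\rho$, both normalised by $|B_r(x)|$. Then take the $\liminf$ and the union with $\partial\Omega$. Your remark on why the fixed normalisation $|B_r(x)|$ matters is accurate.
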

\begin{proof}
    This is clear due to
\begin{align*}
    \frac{1}{|B_r(x)|}\int_{B_r(x) \setminus E_\sigma} u \geqslant     \frac{1}{|B_r(x)|}\int_{B_r(x) \setminus E_\rho} u,
\end{align*}
which holds since we took a nested sequence of exceptional sets.
\end{proof}

\begin{lemma}\label{union_P}
    For any sequence $\rho_k \downarrow 0$ we have that $$P(u) = \bigcup_{k} P_{\rho_k}(u)$$
and
$$\hat P(u)=\bigcup_k\hat P_{\rho_k}(u)$$    
up to sets of measure zero.
\end{lemma}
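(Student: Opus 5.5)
The plan is to prove $P(u) = \bigcup_k P_{\rho_k}(u)$ up to a null set by two inclusions. The statement for $\hat P$ then follows immediately by taking the union with $\partial\Omega$ on both sides.

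\textbf{Easy inclusion.} By \cref{lem:Prho_monotone}, $P_{\rho_k}(u)\subset P(u)$ for every $k$. Hence $\bigcup_k P_{\rho_k}(u)\subset P(u)$ exactly, with no exceptional set.

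\textbf{Reverse inclusion.} We show that $P(u)\setminus \bigcup_k P_{\rho_k}(u)$ has measure zero. First, by \cref{lem:Pu_and_u_positive_same}, almost every point of $P(u)$ is a Lebesgue point of $u$ with $u(x)>0$. Second, set $E:=\bigcap_k E_{\rho_k}$. Since $|E_{\rho_k}|\leqslant \rho_k\to0$, we have $|E|=0$. For a.e. $x\notin E$, choose $k$ with $x\notin E_{\rho_k}$. By the Lebesgue density theorem applied to the measurable set $\Omega\setminus E_{\rho_k}$, almost every such $x$ is a point of density one of $\Omega\setminus E_{\rho_k}$. Discarding a countable union of null sets (one for each $k$), we may therefore assume that $x$ is a Lebesgue point of $u$ with $u(x)>0$, that $x\notin E_{\rho_k}$, and that $x$ has density one in $\Omega\setminus E_{\rho_k}$.

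At such a point we estimate
\[
\frac{1}{|B_r(x)|}\int_{B_r(x)\setminus E_{\rho_k}} u \geqslant u(x)\frac{|B_r(x)\setminus E_{\rho_k}|}{|B_r(x)|} - \fint_{B_r(x)}|u-u(x)|.
\]
As $r\to0$, the first term tends to $u(x)>0$ by the density-one property. The second term tends to $0$ because $x$ is a Lebesgue point. So the liminf is at least $u(x)>0$, and $x\in P_{\rho_k}(u)$. This gives $P(u)\subset\bigcup_k P_{\rho_k}(u)\cup N$ with $|N|=0$.

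\textbf{Main obstacle.} The delicate point is the density step. Points of $P(u)$ lying in every $E_{\rho_k}$, or at which $E_{\rho_k}$ has positive upper density, can genuinely fail to lie in any $P_{\rho_k}(u)$. This is exactly why the statement holds only up to measure zero. We handle it by using the nestedness of the $E_\rho$ (which makes $\bigcap_k E_{\rho_k}$ a null set) together with the Lebesgue density theorem applied to countably many sets, so the union of discarded sets stays null.
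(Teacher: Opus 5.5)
Your proof is correct and follows essentially the same route as the paper. You use the easy inclusion from \cref{lem:Prho_monotone}, note that $|\bigcap_k E_{\rho_k}|=0$ so almost every point avoids some $E_{\rho_k}$, show via Lebesgue differentiation that such a point lies in $P_{\rho_k}(u)$, and then take the union with $\partial\Omega$. The only differences are cosmetic: you pair a Lebesgue point of $u$ with a density-one point of $\Omega\setminus E_{\rho_k}$, where the paper differentiates $u$ and $u\chi_{E_\rho}$ directly, and you collect the countably many exceptional null sets more explicitly than the paper does.
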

\begin{proof}
For the first statement, it suffices to show that \begin{equation}\label{SID2}
        P(u) \subseteq \bigcup_{k} P_{\rho_k}(u)
    \end{equation} since, from \cref{lem:Prho_monotone}, we know that~$P_\rho(u) \subseteq P(u)$ for all $\rho > 0$. 
    
    In fact, more is true: denoting $\Omega_\rho = \Omega \setminus E_\rho$ we claim that \begin{equation}\label{SID}P(u) \cap \Omega_\rho \subset P_\rho(u).\end{equation} Indeed, the Lebesgue differentiation theorem shows that
    \[
    \lim_{r \to \infty} \fint_{B_r(x)} u = u(x) \ \text{ and } \ \lim_{r \to \infty} \fint_{B_r(x)} u \chi_{E_\rho} = u(x)\chi_{E_\rho}(x)
    \]
    for almost every $x \in \Omega.$ Hence, for almost every~$x \in P(u) \cap \Omega_\rho$ we have
    \[
    \liminf_{r\to 0} \fint_{B_r(x)} u\chi_{E_\rho^c} = \liminf_{r=0}\left(\fint_{B_r(x)} u - \fint_{B_r(x)} u\chi_{E_\rho}\right) = u(x) - u(x)\chi_{E_\rho}(x) = u(x)>0,
    \]where the last inequality is a consequence of \cref{lem:Pu_and_u_positive_same}. This establishes~\eqref{SID}.

    Now, since $\rho_k \downarrow 0$ and the sets $E_{\rho_k}$ are nested and decreasing, it follows that for almost all $x \in \Omega$, there exists $\overline k(x)$ such that $x \not\in E_{\rho_{\overline k(x)}}$ (this is an easy consequence of the fact that $|\cap_k E_{\rho_k}| = 0$). 
    
    Hence, if $x \in P(u)$ and $x \not\in E_{\rho_{\overline k(x)}}$, then $x \in P(u) \cap \Omega_{\rho_{\overline k(x)}} \subset P_{\rho_{\overline k(x)}}(u).$
    This finishes the proof of \eqref{SID2}.
    
    For the second statement, we see that
    $$P(u)\cup\partial\Omega =\left(\bigcup_k P_{\rho_k}(u)\right)\cup\partial\Omega = \bigcup_k\big(P_{\rho_k}(u)\cup\partial\Omega\big) $$ up to a null set, where we used the first proven statement.
\end{proof}

\begin{prop}\label{prop:distance_convergence}
        We have that
    $$\lim_{\rho\downarrow0}\dist(x,\hat P_\rho(u)) = \dist(x,\hat P(u))$$ a.e., and hence $$\lim_{\rho\downarrow0}F(\dist(x,\hat P_\rho(u))) = F(\dist(x,\hat P(u)))$$ in $L^1(\Omega)$.
\end{prop}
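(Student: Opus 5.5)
The plan is to combine monotonicity in $\rho$ with a density property of $P(u)$. This density property repairs the fact that \cref{union_P} only holds up to null sets, while distances to sets are not insensitive to null sets. The $L^1$ statement will then follow from dominated convergence.

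\textbf{Step 1: monotone limit and lower bound.} By \cref{nested}, if $\sigma<\rho$ then $\hat P_\rho(u)\subset\hat P_\sigma(u)$. Hence $\rho\mapsto \dist(x,\hat P_\rho(u))$ is non-decreasing in $\rho$, so the limit as $\rho\downarrow0$ exists for every $x$. By \cref{lem:Prho_monotone} this limit is $\geqslant \dist(x,\hat P(u))$. Since every $\hat P_\rho(u)$ contains $\partial\Omega$, all these distances are bounded by $\diam{\Omega}$ for $x\in\Omega$. It therefore suffices to fix a sequence $\rho_k\downarrow0$ and prove
\[
\dist\Big(x,\bigcup_k \hat P_{\rho_k}(u)\Big)\leqslant \dist(x,\hat P(u)).
\]
Monotonicity then identifies this with the limit along the full family $\rho\downarrow 0$.

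\textbf{Step 2: the key density argument, which I expect to be the main obstacle.} \cref{union_P} only gives $\hat P(u)=\bigcup_k\hat P_{\rho_k}(u)\cup N$ with $|N|=0$, up to a null set. Removing a null set could increase the distance if $\hat P(u)$ had "thin" parts. To rule this out, I would show that every $y\in P(u)$ is a point of positive density of $P(u)$, in the weak sense that $|P(u)\cap B_r(y)|>0$ for all $r>0$.

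Indeed, $\liminf_{r\to0}\fint_{B_r(y)}u>0$ forces $\int_{B_r(y)}u>0$ for all small $r$, hence for all $r$. Since $u\geqslant0$, this gives $|\{u>0\}\cap B_r(y)|>0$. By \cref{lem:Pu_and_u_positive_same} we get $|P(u)\cap B_r(y)|>0$. Consequently $B_r(y)$ meets $P(u)\setminus N\subset\bigcup_k P_{\rho_k}(u)$.

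Now, given $\delta>0$, pick $y\in\hat P(u)$ with $|x-y|\leqslant\dist(x,\hat P(u))+\delta$. If $y\in\partial\Omega$, then $y$ already lies in every $\hat P_{\rho_k}(u)$. If $y\in P(u)$, pick $z\in B_\delta(y)\cap\bigcup_kP_{\rho_k}(u)$. In both cases the distance to the union is at most $\dist(x,\hat P(u))+2\delta$. Letting $\delta\to0$ gives the desired inequality, and in fact convergence for every $x$, not just almost every $x$.

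\textbf{Step 3: $L^1$ convergence.} $F$ is Lipschitz, hence continuous, so $F(\dist(x,\hat P_\rho(u)))\to F(\dist(x,\hat P(u)))$ pointwise. These functions are bounded by $F(\diam{\Omega})$, using that $F$ is non-decreasing and the bound from Step 1. Since $\Omega$ is bounded, dominated convergence yields convergence in $L^1(\Omega)$.
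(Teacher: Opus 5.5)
Your proof is correct. Its skeleton is the same as the paper's: monotonicity of $\rho\mapsto\dist(x,\hat P_\rho(u))$ from \cref{nested}, identification of the limit as the distance to the increasing union $\bigcup_k\hat P_{\rho_k}(u)$, and dominated convergence with the bound $F(\mathrm{diam}(\Omega))$.

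The real difference is your Step 2. The paper goes directly from \cref{union_P}, which only gives $\hat P(u)=\bigcup_k\hat P_{\rho_k}(u)$ up to a null set, to the equality $\dist(x,\bigcup_k\hat P_{\rho_k}(u))=\dist(x,\hat P(u))$. As you note, this is not automatic, because distance functions are not invariant under null modifications. For example, $A$ and $A\cup S$ with $S$ a segment in $\mathbb{R}^2$ agree up to a null set but have different distance functions near $S$.

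Your density observation is exactly what closes this step. It says that every $y\in P(u)$ satisfies $|P(u)\cap B_r(y)|>0$ for all $r>0$, which follows from $\liminf_{r\to0}\fint_{B_r(y)}u>0$, $u\geqslant 0$ and \cref{lem:Pu_and_u_positive_same}. Hence $\bigcup_k\hat P_{\rho_k}(u)$ is dense in $\hat P(u)$, so the two sets have the same closure and the same distance function. This makes the paper's step rigorous and upgrades the convergence from almost every $x$ to every $x\in\Omega$.
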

\begin{proof}
It follows from \cref{nested} that if $\rho_k \downarrow 0$, the sets $\hat P_{\rho_k}(u)$ are increasing. 

Now note the following fact: if $A_n \nearrow A$ and $\cup A_n = A$, then $\dist(x,A_n) \searrow \dist(x, A)$. Thus, $\dist(x, \hat P_{\rho_k}(u)) \searrow \dist(x, \cup_k \hat P_{\rho_k}(u)) = \dist(x, \hat P(u))$ the last equality being due to \cref{union_P}. The dominated convergence theorem and the
boundedness of $\Omega$ give the result.
\end{proof}

\begin{theorem}\label{thm:wlsc_Z_simple}
The map 
\[u\mapsto-\int_{\{u=0\}}F(\dist(x,\partial Z(u)))\dx\]
is weakly lower semicontinuous on $K$. %
\end{theorem}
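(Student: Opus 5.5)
Take $u_n \weaklyto u$ in $H^1(\Omega)$ with $u_n,u\in K$. By Rellich--Kondrachov (or simply by passing to a subsequence realising the $\liminf$ and then a further subsequence) we may assume $u_n\to u$ strongly in $L^2(\Omega)$ and a.e. This is exactly the setting of \cref{sec:approximating_problem}: for each $\rho>0$ we have nested Egorov sets $E_\rho$. By \cref{lem:Z_reduction} and \eqref{C2}, the functional to be treated is
\[
G(v):=\int_{\{v=0\}}F(\dist(x,\hat P(v)))\dx ,
\]
and we must show $\limsup_n G(u_n)\leqslant G(u)$.

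\textbf{Step 1: split off the exceptional set.} Since $F\geqslant0$ is non-decreasing and distances are bounded by $\diam{\Omega}$, the integrand is bounded by $M:=F(\diam{\Omega})$. Hence
\[
G(u_n)\leqslant \int_{\{u_n=0\}\setminus E_\rho}F(\dist(x,\hat P(u_n)))\dx + M\rho .
\]

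\textbf{Step 2: replace $\hat P(u_n)$ by $\hat P_\rho(u_n)$.} \cref{lem:Prho_monotone}, applied to $u_n$ with the same $E_\rho$, gives $\hat P_\rho(u_n)\subset \hat P(u_n)$. Therefore $\dist(x,\hat P(u_n))\leqslant\dist(x,\hat P_\rho(u_n))$, and monotonicity of $F$ yields
\[
G(u_n)\leqslant \int_{\{u_n=0\}\setminus E_\rho}F(\dist(x,\hat P_\rho(u_n)))\dx + M\rho .
\]

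\textbf{Step 3: pass to the limit in $n$, then in $\rho$.} Taking $\limsup_n$ and applying \cref{prop:wlsc_like_rho} gives
\[
\limsup_n G(u_n)\leqslant \int_{\{u=0\}\setminus E_\rho}F(\dist(x,\hat P_\rho(u)))\dx + M\rho\leqslant \int_{\{u=0\}}F(\dist(x,\hat P_\rho(u)))\dx+M\rho .
\]
Now let $\rho\downarrow0$ along a sequence. By \cref{prop:distance_convergence}, $F(\dist(x,\hat P_\rho(u)))\to F(\dist(x,\hat P(u)))$ in $L^1$, so the right-hand side tends to $G(u)$. Since the subsequence was chosen to realise the $\liminf$ of $-G(u_n)$, this proves lower semicontinuity of $-G$, and hence of the map in the statement.

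\textbf{Main obstacle.} The delicate point is the interplay of the Egorov sets with the $n$-dependent sets $\hat P_\rho(u_n)$. \cref{LE24} and \cref{prop:wlsc_like_rho} are built with $E_\rho$ fixed and uniform convergence on $\Omega\setminus E_\rho$, and Step 2 requires the same $E_\rho$ in the definition of $P_\rho(u_n)$; \cref{lem:Prho_monotone} holds for any fixed set $E_\rho$ and any nonnegative function, so this is consistent. I would also double-check that the dominated convergence in \cref{prop:wlsc_like_rho} and \cref{prop:distance_convergence} is justified by the uniform bound $M$. Finally, the a.e.\ identification in \cref{union_P} only gives $\dist(x,\cup_k\hat P_{\rho_k}(u))=\dist(x,\hat P(u))$ if the discarded null set does not matter. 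If it does, I would instead use only the inequality direction $\dist(x,\hat P_\rho(u))\to$ something $\geqslant\dist(x,\hat P(u))$ and argue the reverse via \eqref{SID}, using density of the a.e.\ points in $P(u)$, which is open modulo null sets.
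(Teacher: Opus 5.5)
Your proof is correct and is essentially the paper's own argument. It uses the same chain of inequalities: discard $E_\rho$ at cost $M\rho$, pass from $\hat P(u_n)$ to $\hat P_\rho(u_n)$ via \cref{lem:Prho_monotone} and monotonicity of $F$, apply \cref{prop:wlsc_like_rho}, then let $\rho\downarrow0$ using \cref{prop:distance_convergence}, read starting from $u_n$ rather than from $u$; your explicit extraction of an a.e.\ convergent subsequence realising the $\liminf$ before invoking Egorov is a detail the paper leaves implicit. Your worry about the null set in \cref{union_P} is harmless, and your suggested fix ("open modulo null sets") is not the right reason. The right reason is that every $y\in P(u)$ has $|\{u>0\}\cap B_r(y)|>0$ for all small $r$, so it is a limit of points of $P(u)$ outside any prescribed null set; hence the distances to $\bigcup_k\hat P_{\rho_k}(u)$ and to $\hat P(u)$ agree at every point.
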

\begin{proof}
As mentioned, by \cref{lem:Z_reduction}, it suffices to show that $u\mapsto-\int_{\{u=0\}}F(\dist(x,\hat P(u)))\dx$ is weakly lower semicontinuous.  

Given $u \in H^1(\Omega)$, take a sequence $u_n \weaklyto u$ in $H^1(\Omega)$ and sets $E_\rho$ satisfying the properties\footnote{Note that $u_n \to u$ strongly in $L^2(\Omega)$ follows without further regularity of $\Omega$, since $u_n-u_0 \in H^1_0(\Omega)$, and we can use the compact embedding of $H^1_0(\Omega)$ into $L^2(\Omega)$. If $\Omega$ had additional regularity (e.g. Lipschitz boundary), the weak lower semicontinuity result would hold for non-negative functions in $H^1(\Omega)$, not only functions in $K$.} described at the start of \cref{sec:approximating_problem}. 
We manipulate
\begin{align*}
    -\int_{\{u=0\}}F(\dist(x, \hat P_\rho(u)))\dx &=     -\int_{\{u=0\}\setminus E_\rho} F(\dist(x, \hat P_\rho(u)))\dx    \\& -\int_{\{u=0\} \cap E_\rho} F(\dist(x, \hat P_\rho(u))) \dx \\
    &\leqslant \liminf_{n \to \infty}-\int_{\{u_n=0\}\setminus E_\rho} F(\dist(x, \hat P_\rho(u_n)))\dx    + C_1\rho\tag{using \cref{prop:wlsc_like_rho} and \eqref{JISKND1}}\\
    &\leqslant \liminf_{n \to \infty}-\int_{\{u_n=0\}} F(\dist(x, \hat P_\rho(u_n)))\dx    + C_2\rho\tag{adding and subtracting the appropriate integral and using \eqref{JISKND1}}\\
        &\leqslant \liminf_{n \to \infty}-\int_{\{u_n=0\}} F(\dist(x, \hat P(u_n)))\dx    + C_2\rho
\end{align*}
for constants $C_1, C_2 > 0$, where we used \cref{lem:Prho_monotone} to obtain the final inequality. Taking the limit as $\rho \to 0$ on the both sides and using \cref{prop:distance_convergence} we get the result.
\end{proof}

Finally, we can prove the existence of minimisers.
\begin{proof}[Proof of \cref{thm:existence}]
Clearly $F(\dist(x,\partial Z(u)))  \leqslant F(\diam{\Omega})$ for every admissible $u$. Hence $J$ is bounded below on ${K}$. Thanks to \cref{thm:wlsc_Z_simple}, the rest of the argument is a standard application of the direct method. 
\end{proof}

\begin{remark}
    If in ${K}$ we omitted the boundary  condition that $u=u_0$ on $\partial\Omega$, we would need to enforce conditions on $f$ to show existence, see for example \cite{ShahgholianGustafsson}. The presence of $u_0$ helps because we can transform $\tilde u := u-u_0 \in H^1_0(\Omega)$ to rewrite the problem over $H^1_0(\Omega)$ and apply Poincaré's inequality to deal with the source term (to show that minimising sequences are bounded).
\end{remark}

\section{Stationarity and equations for the minimisers}\label{sec:stationarity}
In this section, we discuss the functional equations and inequalities satisfied
by the minimisers.
For this, we start with an obvious consequence of the definition of~$P(u)$ in~\eqref{PIUde}:

\begin{lemma}\label{OAKSlK}
If $v \geqslant w$ then $P(w) \subset P(v)$ and $\hat P(w) \subset \hat P(v)$.
\end{lemma}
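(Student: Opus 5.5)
The argument is a pointwise comparison of the averages that define $P(\cdot)$ in \eqref{PIUde}. The hypothesis $v \geqslant w$ is meant almost everywhere in $\Omega$, which is the natural reading for $H^1$ functions. I also take $w \geqslant 0$, as for all admissible functions in $K$, although the proof does not actually need it.

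\emph{First inclusion.} Fix $x \in P(w)$. For every $r>0$ small enough that the averages are defined, integrating the a.e. inequality $v \geqslant w$ over $B_r(x)$ gives
\[
\fint_{B_r(x)} v \;\geqslant\; \fint_{B_r(x)} w .
\]
The liminf is monotone, so letting $r \to 0$ yields
\[
\liminf_{r\to 0}\fint_{B_r(x)} v \;\geqslant\; \liminf_{r\to0}\fint_{B_r(x)} w \;>\; 0 .
\]
The strict inequality holds precisely because $x \in P(w)$. Hence $x \in P(v)$, which proves $P(w) \subset P(v)$.

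\emph{Second inclusion.} This follows by taking the union of both sides with the fixed set $\partial\Omega$:
\[
\hat P(w) \;=\; P(w)\cup\partial\Omega \;\subset\; P(v)\cup\partial\Omega \;=\; \hat P(v),
\]
using the definition \eqref{eq:hatP}.

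The argument has no genuine obstacle. The only point to check is that the a.e. inequality between $v$ and $w$ suffices. It does, because both $P(\cdot)$ and $\hat P(\cdot)$ depend only on integrals of the function, which are insensitive to changes on null sets (as in the earlier lemma showing $Z$ and $P$ are independent of the representative).

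A consequence worth recording for later use: since distance is monotone decreasing under set inclusion, $\dist(x,\hat P(v)) \leqslant \dist(x,\hat P(w))$ for all $x$.
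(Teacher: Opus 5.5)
Your proof is correct and takes the same approach as the paper, which states this lemma without proof as an obvious consequence of \eqref{PIUde}. The intended argument is exactly the comparison of the ball averages under $v \geqslant w$ a.e., followed by taking the union with $\partial\Omega$. Your closing remark on the reversed inequality for the distances is also how the lemma is used later, in the proof of \cref{P:3.2}.
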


Now we present a useful functional inequality enjoyed by minimisers:

\begin{prop}\label{P:3.2}
If $u$ is a minimiser of $J$, we have
\begin{equation}\label{3.30}
-\Delta u \leqslant f\chi_{\{u > 0\}} \quad \text{on $\Omega$}
\end{equation}
\end{prop}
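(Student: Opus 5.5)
The plan is to test minimality of $u$ against upward perturbations $u+t\varphi$ with $\varphi\in C_c^\infty(\Omega)$, $\varphi\geqslant 0$, $t>0$. These stay in $K$ and never shrink the positivity set. The sign convention for ``$-\Delta u\leqslant f\chi_{\{u>0\}}$'' in the distributional sense is
\[
\int_\Omega \nabla u\cdot\nabla\varphi \leqslant \int_{\{u>0\}} f\varphi \quad\text{for all } \varphi\in C_c^\infty(\Omega),\ \varphi\geqslant 0 .
\]
Upward perturbations produce the opposite inequality, so I would instead perturb \emph{downward}, using variations that decrease $u$ while keeping it non-negative. The natural choice is $v_t:=(u-t\varphi)^+ = u - \min(u,t\varphi)$ for $t>0$. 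Then $v_t\in K$, since the boundary data are untouched because $\varphi$ has compact support, and $v_t\leqslant u$.

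\textbf{Effect on the non-local term.} By \cref{OAKSlK}, $\hat P(v_t)\subset \hat P(u)$, so $\dist(x,\hat P(v_t))\geqslant \dist(x,\hat P(u))$. Since $F$ is non-decreasing, $F(\dist(x,\hat P(v_t)))\geqslant F(\dist(x,\hat P(u)))$. Moreover $\{v_t=0\}\supset\{u=0\}$ and $F\geqslant 0$. Using the reformulation via \cref{lem:Z_reduction}, this gives
\[
-\int_{\{v_t=0\}}F(\dist(x,\hat P(v_t)))\,\dx \leqslant -\int_{\{u=0\}}F(\dist(x,\hat P(u)))\,\dx .
\]
So the non-local part can only decrease, and minimality $J(u)\leqslant J(v_t)$ yields the purely local inequality
\[
\int_\Omega\Big(\tfrac12|\nabla u|^2-fu\Big)\leqslant \int_\Omega\Big(\tfrac12|\nabla v_t|^2-fv_t\right)\!\!\Big. .
\]
This monotonicity step is the key structural point, and it is why the inequality has this sign.

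\textbf{Differentiating in $t$.} Write $v_t=u-w_t$ with $w_t=\min(u,t\varphi)$, so that $0\leqslant w_t\leqslant t\varphi$, $w_t/t\to\varphi\chi_{\{u>0\}}$ a.e. Expanding, dividing by $t$ and using $|\nabla w_t|^2\geqslant 0$ gives
\[
\int \nabla u\cdot\nabla (w_t/t) \leqslant \int f\, w_t/t + \tfrac{1}{2t}\int|\nabla w_t|^2 .
\]
This step is the main obstacle, because $w_t$ is only controlled in $H^1$ on $\{u>t\varphi\}$ and $\{u\leqslant t\varphi\}$ separately. On $\{u>t\varphi\}$ we have $\nabla w_t=t\nabla\varphi$. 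On $\{0<u\leqslant t\varphi\}$ we have $\nabla w_t=\nabla u$, and by Stampacchia $\nabla u=0$ a.e. on $\{u=0\}$. Hence
\[
\int\nabla u\cdot\nabla w_t/t = \int_{\{u>t\varphi\}}\nabla u\cdot\nabla\varphi + \tfrac1t\int_{\{0<u\leqslant t\varphi\}}|\nabla u|^2 .
\]
The last term is non-negative and cancels against its counterpart $\frac1{2t}\int_{\{0<u\le t\varphi\}}|\nabla u|^2$ on the right with a favourable sign, leaving a nonnegative surplus $\tfrac1{2t}\int|\nabla u|^2$ on the left that we may drop. The remaining term $\frac t2\int|\nabla\varphi|^2\to0$. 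Then letting $t\downarrow0$ with dominated convergence, using $\chi_{\{u>t\varphi\}}\to\chi_{\{u>0\}\cup\{\varphi=0\}}$ and $\nabla u=0$ a.e. on $\{u=0\}$, gives the desired inequality
\[
\int\nabla u\cdot\nabla\varphi\leqslant\int_{\{u>0\}}f\varphi .
\]
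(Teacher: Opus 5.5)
Your proof is correct and follows the paper's route: the same downward competitor $(u-t\varphi)^+$, the same use of \cref{OAKSlK}, the monotonicity of $F$ and the inclusion $\{u=0\}\subset\{(u-t\varphi)^+=0\}$ to discard the non-local term, and the same limit $\min(u,t\varphi)/t\to\varphi\chi_{\{u>0\}}$ for the source term. The only difference is in the Dirichlet term: the paper simply uses $|\nabla(u-t\varphi)^+|\leqslant|\nabla(u-t\varphi)|$ a.e., which avoids your piecewise computation of $\nabla\min(u,t\varphi)$; also, in that computation the correct limit is $\chi_{\{u>t\varphi\}}\to\chi_{\{u>0\}}$, not $\chi_{\{u>0\}\cup\{\varphi=0\}}$, though this is harmless since $\nabla u=0$ a.e.\ on $\{u=0\}$.
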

\begin{proof}
Take $\varphi \in C_c^\infty(\Omega)$, $\varphi \geqslant 0$ and set $w=(u-t\varphi)^+$ for $t>0$. From the minimality of~$u$, we have that $J(u) \leqslant J(w)$, whence
\begin{align*}
0 &\leqslant J((u-t\varphi)^+) - J(u)\\
&= \int_\Omega \frac 12 |\nabla (u-t\varphi)^+|^2 - f(u-t\varphi)^+ - \frac 12|\nabla u|^2 + fu+
\int_{\{u=0\}} F(\dist(x, \hat P(u)))\dx\\
&\quad -\int_{\{(u-t\varphi)^+=0\}} F(\dist(x, \hat P((u-t\varphi)^+)))\dx\\
&\leqslant \int_\Omega \frac 12 |\nabla (u-t\varphi)|^2 - f(u-t\varphi)^+ - \frac 12|\nabla u|^2 + fu+
\int_{\{u=0\}} F(\dist(x, \hat P(u)))\dx\\
&\quad -\int_{\{(u-t\varphi)^+=0\}} F(\dist(x, \hat P((u-t\varphi)^+)))\dx.
\end{align*}
Now consider
\begin{equation}\label{EDGH91}
\frac 1t\int_\Omega  fu- f(u-t\varphi)^+ = \frac 1t\int_\Omega f\min(u,t\varphi)
\end{equation}
and note that pointwise a.e., if $x$ is such that $u(x)>0$, we have $u(x)-t\varphi(x) \geqslant u(x) - t\norm{\varphi}{\infty}$, and this will be non-negative for small enough $t$ (depending on~$x$), hence, on the set $\{ u > 0\}$, $\min(u,t\varphi)\slash t \to \varphi$ pointwise a.e. On the set $\{u=0\}$, $\min(u,t\varphi)\slash t = 0$. That is, $\min(u,t\varphi)\slash t \to \varphi\chi_{\{u > 0\}}$ a.e., and since $\min(u,t\varphi)\slash t \leqslant \varphi$, this convergence holds also in $L^p(\Omega)$ for all $p<\infty$. Hence the integral in~\eqref{EDGH91} converges to 
\[\int_\Omega f\varphi\chi_{\{u > 0\}}.\]
Finally, consider
\begin{equation}\label{EDGH912}
\int_{\{u=0\}} F(\dist(x, \hat P(u)))\dx -\int_{\{(u-t\varphi)^+=0\}} F(\dist(x, \hat P((u-t\varphi)^+)))\dx.
\end{equation}
Since $u \geqslant u-t\varphi$, we have $u^+ = u \geqslant (u-t\varphi)^+$, giving, by \cref{OAKSlK}, that
$\dist(x,\hat P(u)) \leqslant \dist(x, \hat P((u-t\varphi)^+))$. 

Using this, we estimate the quantity in~\eqref{EDGH912} by
\begin{equation}\begin{split}\label{PTHI}
&\int_\Omega \chi_{\{u=0\}} F(\dist(x, \hat P(u))) -\chi_{\{(u-t\varphi)^+=0\}} F(\dist(x, \hat P((u-t\varphi)^+)))\dx\\
&\leqslant \int_\Omega \left(\chi_{\{u=0\}}  -\chi_{\{(u-t\varphi)^+=0\}}\right) F(\dist(x, \hat P((u-t\varphi)^+)))\dx.
\end{split}\end{equation}
Now we observe that if~$ u(x)=0$ then~$(u(x)-t\varphi(x))^+ =
(-t\varphi(x))^+ =0$. Therefore,
$$ \chi_{\{u=0\}}  -\chi_{\{(u-t\varphi)^+=0\}}\leqslant 0.$$
Plugging this information into~\eqref{PTHI}, we find that
$$\int_\Omega \chi_{\{u=0\}} F(\dist(x, \hat P(u))) -\chi_{\{(u-t\varphi)^+=0\}} F(\dist(x, \hat P((u-t\varphi)^+)))\dx\leqslant 0.$$
These considerations show that
\[0 \leqslant \int_\Omega -\nabla u \nabla \varphi  + \int_\Omega f\varphi \chi_{\{u > 0\}}\]
and hence the desired result plainly follows.
\end{proof}

We observe that, if $f \leqslant 0$, \cref{P:3.2} gives that $u$ is subharmonic, and hence $r \mapsto \fint_{B_r(x)}u$ is non-decreasing in $r$, and so $\lim_{r \to 0}\fint_{B_r(x)}u$ exists for every $x$. This lets us replace the $\liminf$ in the definition of $P(u)$ with the limit. Secondly, we know that the above limit must coincide with $u(x)$ a.e. by the Lebesgue differentiation theorem, so we could choose a version of $u$ such that $\lim_{r \to 0}\fint_{B_r(x)}u = u(x)$ for every $x \in \Omega$
(and, in this setting, we actually have that $P(u) = \{ u > 0\}$).

We conclude this subsection by proving that the Poisson equation is solved by the minimiser on the interior of the positivity set.
\begin{proof}[Proof of \cref{prop:pde_solved_on_Pu}]
It suffices to show that
\begin{equation}\label{3.2.1}
-\Delta u \geqslant f  \quad \text{on $\mathrm{int}(P(u))$},
\end{equation}
since this, combined with \cref{lem:Pu_and_u_positive_same} and \eqref{3.30}, gives~\eqref{3.2.2}.

Thus, we focus on the proof of \eqref{3.2.1}.
For this, we define $Q: = \mathrm{int}(P(u))$ and take $\varphi \in C_c^\infty(Q)$ with $\varphi \geqslant 0$. We claim that, up to sets of measure zero,
\begin{equation}\label{HG67}
    \{u=0\} = \{ u + t\varphi = 0\}.
\end{equation}
Indeed, if~$u(x)+t\varphi(x)=0$, the fact that~$u$ and $\varphi$ are nonnegative gives that~$u(x)=\varphi(x)=0$, therefore~$ \{ u + t\varphi = 0\}$
is included in~$\{u=0\} $.

To show the reverse inclusion, suppose that $u(x)=0$.
Then, by~\cref{lem:Pu_cap_u_zero_measure_zero}, we know that~$x\not\in P(u)$,
up to a set of null measure that we can disregard here.

Hence, since $\varphi$ is supported in $Q \subset P(u)$, we obtain that~$\varphi(x)=0$. As a result, $u(x)+t\varphi(x)=0$, showing that~$\{u=0\} $
is included in~$ \{ u + t\varphi = 0\}$.
The proof of~\eqref{HG67} is thereby complete.

Also, the minimality of~$u$ gives that~$0 \leqslant J(u+t\varphi)-J(u)$. It follows from this and~\eqref{HG67} that
    \begin{align}
        0 &\leqslant \int_\Omega \frac 12 |\nabla (u+t\varphi)|^2 - \frac 12 |\nabla u|^2 - tf\varphi \nonumber \\
        &\quad + \int_{\{u=0\}} F(\dist(x, \hat P(u)))-F(\dist(x, \hat P(u+t\varphi)))\dx.\label{02ierkf1}
    \end{align}
Now we claim that \begin{equation}\label{02ierkf0}
P(u+t\varphi) = P(u).    
\end{equation}
To see this, observe first that the inclusion $P(u) \subset P(u+t\varphi)$ follows from the non-negativity of~$\varphi$. 

For the reverse inclusion, suppose that $x \in P(u+t\varphi)$. Then,
    \begin{equation}\label{02ierkf}
        \liminf_{r \to 0} \fint_{B_r(x)} u         
        = \liminf_{r \to 0} \fint_{B_r(x)} (u + t\varphi) - t\varphi(x)
        >- t\varphi(x).
    \end{equation}
     Suppose now, for the sake of contradiction, that $x \not\in P(u)$. Then,
     $$\liminf_{r \to 0} \fint_{B_r(x)} u=0.$$
Also, $\varphi(x)=0$ (since $\varphi$ is supported in~$Q\subset P(u)$)
and we thus deduce from~\eqref{02ierkf} that~$0>0$.
This contradiction completes the proof of~\eqref{02ierkf0}.

Therefore, in light of~\eqref{02ierkf0},
the two distance terms in~\eqref{02ierkf1} are equal, and we end up with $0 \leqslant \int_\Omega \nabla u \cdot \nabla \varphi - f\varphi,$ giving \eqref{3.2.1}, as desired. 
\end{proof}

\section{Free boundary condition}\label{sec:fbc}

In this section, we discuss the free boundary condition for the minimisers. To this end, we have to strengthen our assumptions by taking also \cref{ass:measure_Pu_zero}, i.e., we work with minimisers that satisfy $|\partial P(u)|=0$.

Now, for convenience, we split our energy $\Jfull = D + \Jd$  into the following two components:
\[ D(u) = \int_\Omega \left(\frac 12 |\nabla u|^2 - fu\right) \ \qquad \ \text{and} \ \qquad \Jd(u) = - \int_{\{u=0\}} F(\dist(x,\hat P(u)))\dx.\]
We define
\[
m_u(x):=\dist(x,\hat P(u)).
\]
Since $\hat P(u)$ need not be closed, it is useful to recall that
\[
m_u(x)=\dist(x,\overline{\hat P(u)}).
\]
Moreover, we have:
\begin{enumerate}[itemsep=0pt, label=(\roman*)]
    \item $m_u(x)\geqslant 0$ for all $x\in\Omega$;
    \item $m_u$ is $1$-Lipschitz;
    \item $m_u$ is differentiable a.e.\ in $\Omega$, and $\nabla m_u\in L^\infty(\Omega)$.
\end{enumerate}
We can write $\Jd$ as
\begin{equation}
    \label{J_en} \Jd(u) = - \int_{\{u=0\}} (F \circ m_u)(x)\dx.
\end{equation} 
Now, observe that since $F\colon \mathbb{R}_+ \to \mathbb{R}_+$ is assumed to be monotonic, it is differentiable almost everywhere with $F' \in L_{\rm{loc}}^1(\mathbb{R}).$ This useful fact will be employed later in \cref{DWLYUAS}. %

Recall that a function $\Phi\colon \Omega \to \Omega$ is said to be \emph{bi-Lipschitz} if there exists a constant $c \geqslant 1$ such that $$c^{-1}|x-y| \leqslant |\Phi(x) - \Phi(y)| \leqslant c|x-y|.$$

\begin{lemma}\label{lem_P_PHI_com}
    Assume $\Phi: \Omega \to \Omega$ is a bi-Lipschitz $C^1$-diffeomorphism. We define, on a non-negative function $u \in H^1(\Omega)$, $u_\Phi(x) = u(\Phi^{-1}(x))$. Then 
    \begin{equation}\label{P_PHI_com}
        P(u_\Phi) = \Phi(P(u)).
    \end{equation}
\end{lemma}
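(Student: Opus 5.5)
To prove \eqref{P_PHI_com}, I will show $\Phi(P(u))\subset P(u_\Phi)$ and get the reverse inclusion by symmetry. The map $\Phi^{-1}$ is also a bi-Lipschitz $C^1$-diffeomorphism of $\Omega$, and $u=(u_\Phi)_{\Phi^{-1}}$, so the first inclusion applied to $\Phi^{-1}$ and $u_\Phi$ gives $\Phi^{-1}(P(u_\Phi))\subset P(u)$, which is the reverse inclusion. Since positivity of the liminf of averages is unchanged by multiplying by a positive constant, I can work with unnormalised integrals divided by $r^d$.

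Fix $y=\Phi(x)$ with $x\in P(u)$, and let $c$ be the bi-Lipschitz constant. The key observation is the set inclusion $\Phi(B_{r/c}(x))\subset B_r(y)$, valid for $r$ small enough that $B_r(y)\subset\Omega$. Changing variables with $z=\Phi(w)$ gives
\[
\int_{B_r(y)} u_\Phi(z)\,\dz \;\geqslant\; \int_{\Phi(B_{r/c}(x))} u(\Phi^{-1}(z))\,\dz \;=\; \int_{B_{r/c}(x)} u(w)\,|\det D\Phi(w)|\,\d w .
\]
For the first inequality I use $u\geqslant 0$. For the equality I use the change of variables formula for the $C^1$ diffeomorphism $\Phi$. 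This formula is legitimate for nonnegative measurable $u$ by Tonelli/standard measure theory, so no Sobolev regularity of $u$ is needed.

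Next I bound the Jacobian from below. Because $\Phi^{-1}$ is $c$-Lipschitz, $|D\Phi(w)^{-1}|\leqslant c$, and hence $|\det D\Phi(w)|\geqslant c^{-d}$. Dividing by $|B_r(y)|=c^d|B_{r/c}(x)|$, I get
\[
\fint_{B_r(y)} u_\Phi \;\geqslant\; c^{-2d}\fint_{B_{r/c}(x)} u .
\]
Taking $\liminf_{r\to0}$, which is equivalent to taking $\liminf$ as $r/c\to0$, the right-hand side is strictly positive because $x\in P(u)$. Therefore $y\in P(u_\Phi)$.

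I expect the only delicate points to be the following. The Jacobian bound has to come from the bi-Lipschitz property rather than from compactness, because $\Omega$ is open and $D\Phi$ need not be bounded away from singular uniformly otherwise. The ball inclusion requires the ball to lie inside $\Omega$, which is harmless since $x$ is interior, $y=\Phi(x)\in\Omega$, and only small radii matter. I also need to note that $u_\Phi$ is well defined as an a.e.-class independent of the representative of $u$: a bi-Lipschitz map sends null sets to null sets, so this holds, and by the earlier lemma on representatives, $P(u_\Phi)$ depends only on the class.
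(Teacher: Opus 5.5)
Your proof is correct and follows essentially the paper's route: the bi-Lipschitz ball inclusions, the change of variables for the $C^1$ diffeomorphism $\Phi$, and a Jacobian bound obtained from the bi-Lipschitz constant. The only difference is how the reverse inclusion is obtained. The paper gets both inclusions at once from the two-sided bound $bc^{-d}\fint_{B_{r/c}(w)}u \leqslant \fint_{B_r(x)}u_\Phi \leqslant Bc^{d}\fint_{B_{cr}(w)}u$ with $w=\Phi^{-1}(x)$, whereas you use only the lower bound and then apply it to $\Phi^{-1}$ and $u_\Phi$. That is legitimate because your one-sided argument uses only nonnegativity and measurability.
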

\begin{proof}
    By the bi-Lipschitz assumption, if $w = \Phi^{-1}(x)$,
     for all $r > 0$ 
    we have that
    \begin{equation}\label{INCL}B_{r/c}(w) \subset \Phi^{-1}(B_r(x)) \subset B_{cr}(w).\end{equation}
    Now, define 
    $$I_\Phi^r(x) := \dfrac{1}{|B_r|}\int_{\Phi^{-1}(B_r(x))} u(z){\rm det}(\nabla \Phi(z))\dz$$ 
    Since $\Phi$ is bi-Lipschitz we have $0 < b < {\rm det}(\nabla \Phi) < B$ for some constants $b,B \in \mathbb{R}$. 
    It follows from this and the fact that $u \geqslant 0$ that
    $$\dfrac{b}{|B_r|}\int_{\Phi^{-1}(B_r(x))}u(z)\dz \leqslant I_\Phi^r(x) \leqslant \dfrac{B}{|B_r|}\int_{\Phi^{-1}(B_r(x))}u(z)\dz.$$
Then, by~\eqref{INCL} and using $|B_{r/c}| = c^{-d}|B_r|$ and $|B_{cr}| = c^d|B_r|$, we find that
    $$bc^{-d}\fint_{B_{r/c}(x)}u(z)\dz \leqslant I_\Phi^r(x) \leqslant Bc^d\fint_{B_{cr}(x)}u(z)\dz.$$ Therefore,
    $$bc^{-d}\liminf_{r\to 0}\fint_{B_{r}(w)}u(z)\dz \leqslant \liminf_{r\to 0}I_\Phi^r(x) \leqslant Bc^d\liminf_{r\to 0}\fint_{B_{r}(w)}u(z)\dz.$$ 
    This inequality allows us to finish the proof due to the identity
    \begin{equation}\label{LIDEN}
        \fint_{B_r(x)} u_\Phi(y)\d y = \fint_{B_r(x)} u(\Phi^{-1}(y))\d y = \dfrac{1}{|B_r|}\int_{\Phi^{-1}(B_r(x))}u(z){\rm det}(\nabla\Phi(z))\dz = I_\Phi^r(x).
    \end{equation}
    Indeed, on the one hand, if $w = \Phi^{-1}(x) \in P(u)$ then $$\liminf_{r\to 0} \fint_{B_r(w)}u(z)\dz > 0,$$ hence $x \in P(u_\Phi)$, owing to~\eqref{LIDEN}, which shows that~$\Phi(P(u))$ is contained in~$P(u_\Phi) $.
    
      On the other hand, if $w = \Phi^{-1}(x) \notin P(u)$, then 
$$\liminf_{r\to 0} \fint_{B_r(w)}u(z)\dz = 0,$$ and hence, by \eqref{LIDEN}, we get $x \notin P(u_\Phi)$. This shows that~$P(u_\Phi)$ is a subset
of~$\Phi( P(u))$.
\end{proof}

Let now $\bphi \in C_0^\infty(\Omega, \mathbb{R}^d)$ and for $\varepsilon \in \mathbb{R}$ define 
\begin{equation}\label{eq:alphaepsphieps}    
\balpha_\varepsilon(x) = x + \varepsilon\bphi(x). 
\end{equation}
For $|\varepsilon| > 0$ small, $\balpha_\varepsilon$ is a diffeomorphism. Let us also define $u_\varepsilon$ through the implicit relation 
\begin{equation}
u_\varepsilon(\balpha_\varepsilon(x)) = u(x).\label{eq:uepsilondefn}
\end{equation}
Then, we have:

\begin{lemma}\label{lemma_alphae_com}
    For $u \in {K}$ we have $P(u_\varepsilon) = \balpha_\varepsilon(P(u))$ and hence $\overline{P(u_\varepsilon)} = \balpha_\varepsilon(\overline{P(u)})$. 
    
    In addition, $\hat P(u_\varepsilon) = \balpha_\varepsilon(\hat P(u))$ and hence $\overline{\hat P(u_\varepsilon)} = \balpha_\varepsilon(\overline{\hat P(u)})$ for sufficiently small $|\varepsilon|$. 
\end{lemma}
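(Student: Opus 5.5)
The plan is to reduce everything to \cref{lem_P_PHI_com} by checking that $\balpha_\varepsilon$ satisfies its hypotheses, and then to handle closures and the boundary term $\partial\Omega$ using the fact that $\balpha_\varepsilon$ is the identity near $\partial\Omega$.

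\textbf{Step 1: $\balpha_\varepsilon$ is an admissible map.} Since $\bphi \in C_0^\infty(\Omega,\mathbb{R}^d)$, the map $\bphi$ is globally Lipschitz with some constant $L$, and it vanishes outside a compact set $\mathrm{supp}\,\bphi \subset \Omega$. For $|\varepsilon| L<1$ we have
\[(1-|\varepsilon|L)|x-y|\leqslant|\balpha_\varepsilon(x)-\balpha_\varepsilon(y)|\leqslant(1+|\varepsilon|L)|x-y|.\]
So $\balpha_\varepsilon$ is injective and bi-Lipschitz, and it is $C^1$ with invertible Jacobian $I+\varepsilon\nabla\bphi$.

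We also need $\balpha_\varepsilon(\Omega)=\Omega$. Outside $\mathrm{supp}\,\bphi$ the map is the identity. A map of $\mathbb{R}^d$ of the form identity plus a small compactly supported Lipschitz perturbation is a homeomorphism of $\mathbb{R}^d$: it is injective and proper, and by invariance of domain its image is open and closed. It therefore maps $\Omega$ onto $\Omega$ once $|\varepsilon|\,\|\bphi\|_\infty<\dist(\mathrm{supp}\,\bphi,\partial\Omega)$, since then no point is moved across $\partial\Omega$. This is the step I expect to need the most care, namely the surjectivity onto $\Omega$. I would state it via the global homeomorphism argument rather than reprove it in detail.

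\textbf{Step 2: apply \cref{lem_P_PHI_com}.} With $\Phi=\balpha_\varepsilon$, the definition \eqref{eq:uepsilondefn} reads $u_\varepsilon=u\circ\balpha_\varepsilon^{-1}=u_\Phi$. The lemma then gives $P(u_\varepsilon)=\balpha_\varepsilon(P(u))$ directly. Note that $u_\varepsilon\in H^1(\Omega)$ and $u_\varepsilon\geqslant0$, since composition with a bi-Lipschitz map preserves these properties.

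For the closures, $\balpha_\varepsilon$ extends to a homeomorphism of $\mathbb{R}^d$, by the identity outside $\Omega$. Homeomorphisms commute with closure, so $\overline{P(u_\varepsilon)}=\overline{\balpha_\varepsilon(P(u))}=\balpha_\varepsilon(\overline{P(u)})$, with the closure taken in $\mathbb{R}^d$.

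\textbf{Step 3: the hatted sets.} The extended $\balpha_\varepsilon$ is the identity on a neighbourhood of $\partial\Omega$, because $\bphi$ has compact support in $\Omega$. Hence $\balpha_\varepsilon(\partial\Omega)=\partial\Omega$. Since images commute with unions,
\[\hat P(u_\varepsilon)=\balpha_\varepsilon(P(u))\cup\balpha_\varepsilon(\partial\Omega)=\balpha_\varepsilon(\hat P(u)).\]
Applying the homeomorphism-commutes-with-closure fact once more gives $\overline{\hat P(u_\varepsilon)}=\balpha_\varepsilon(\overline{\hat P(u)})$ for all sufficiently small $|\varepsilon|$.
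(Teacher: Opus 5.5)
Your proposal is correct and follows the paper's argument: verify that $\balpha_\varepsilon$ satisfies the hypotheses of \cref{lem_P_PHI_com}, apply it to get $P(u_\varepsilon)=\balpha_\varepsilon(P(u))$, and then use that $\balpha_\varepsilon$ is a homeomorphism fixing $\partial\Omega$ to pass to closures and to the hatted sets. You supply details the paper leaves implicit, namely surjectivity onto $\Omega$ and closures taken in $\R^d$ via the extension by the identity; the condition $|\varepsilon|\,\|\bphi\|_\infty<\dist(\mathrm{supp}\,\bphi,\partial\Omega)$ is not actually needed, because a bijection of $\R^d$ that is the identity off $\mathrm{supp}\,\bphi$ automatically maps $\mathrm{supp}\,\bphi$, and hence $\Omega$, onto itself.
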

\begin{proof} Notice that $\balpha_\varepsilon$
satisfies the properties required on $\Phi$ in \cref{lem_P_PHI_com}. Hence, %
the first claim is a consequence of \cref{lem_P_PHI_com}. For the second claim, we use the fact that $\balpha_\varepsilon\colon \overline\Omega \to \overline\Omega$ is a diffeomorphism and hence maps boundaries to boundaries.
\end{proof}
  \cref{lemma_alphae_com} and~\cref{ass:measure_Pu_zero} imply that
\[
|\partial P(u_\varepsilon)|=0.
\]
This fact will be used in the proof of the following result.

\begin{prop}\label{lem:pointwise_derivative}
Let \cref{ass:measure_Pu_zero} hold for a minimiser $u$ of $J$. Let~$s\in\R$ be sufficiently small. Then, for a.e.\ $x\in\{u=0\}$,
the map 
$\varepsilon\mapsto m_{u_\varepsilon}(\balpha_\varepsilon(x))$ is differentiable at $s$ with
\[
\frac{d}{d\varepsilon}m_{u_\varepsilon}(\balpha_\varepsilon(x))\Big|_{\varepsilon=s}
=
\frac{\balpha_s(x)-\balpha_s(p_s(x))}{|\balpha_s(x)-\balpha_s(p_s(x))|}
\cdot(\bphi(x)-\bphi(p_s(x))),
\]
where $p_s(x)\in\overline{\hat P(u)}$ is the unique minimiser of 
\begin{equation}
\min_{z \in \overline{\hat P(u)}} |\balpha_s(x)-\balpha_s(z)|.    \label{eq:p_sx}
\end{equation}
\end{prop}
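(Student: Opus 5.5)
The plan is to turn the quantity into a fixed-set distance and then apply a Danskin-type envelope argument. By \cref{lemma_alphae_com}, $\overline{\hat P(u_\varepsilon)} = \balpha_\varepsilon(\overline{\hat P(u)})$. Hence
\[
m_{u_\varepsilon}(\balpha_\varepsilon(x)) = \min_{z\in \overline{\hat P(u)}} g(\varepsilon,z), \qquad g(\varepsilon,z):=|\balpha_\varepsilon(x)-\balpha_\varepsilon(z)|.
\]
The constraint set $C:=\overline{\hat P(u)}$ is compact, since it is a closed subset of $\overline\Omega$, and it no longer depends on $\varepsilon$. This reduces the claim to differentiating a minimum over a fixed compact set. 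Moreover $\balpha_\varepsilon(x)-\balpha_\varepsilon(z) = x-z+\varepsilon(\bphi(x)-\bphi(z))$ is affine in $\varepsilon$, so $g$ is jointly continuous. At any $z$ with $g(s,z)\neq 0$, $g$ is differentiable in $\varepsilon$ with
\[
\partial_\varepsilon g(s,z) = \frac{\balpha_s(x)-\balpha_s(z)}{|\balpha_s(x)-\balpha_s(z)|}\cdot(\bphi(x)-\bphi(z)).
\]

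\textbf{Step 1: select the good points.} I would take those $x\in\{u=0\}$ that lie outside $\overline{P(u)}$, which removes only a null set. Indeed, $\{u=0\}\cap P(u)$ is null by \cref{lem:Pu_cap_u_zero_measure_zero}, and $\partial P(u)$ is null by \cref{ass:measure_Pu_zero}. Since $x\in\Omega$, it is also at positive distance from $\partial\Omega$, so $x\notin C$. Because $\balpha_s$ is injective, $g(s,z)\ge c^{-1}|x-z|\ge c^{-1}\dist(x,C)>0$ uniformly in $z\in C$. The same lower bound holds for $\varepsilon$ near $s$, so $\varepsilon\mapsto g(\varepsilon,z)$ is $C^1$ near $s$ uniformly in $z\in C$, with $\partial_\varepsilon g$ jointly continuous. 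Danskin's theorem then gives that $\varepsilon\mapsto\min_C g(\varepsilon,\cdot)$ has one-sided derivatives at $s$. The right derivative is $\min_{z\in M_s}\partial_\varepsilon g(s,z)$ and the left derivative is $\max_{z\in M_s}\partial_\varepsilon g(s,z)$, where $M_s$ is the set of minimisers in \eqref{eq:p_sx}. If $M_s=\{p_s(x)\}$, the two agree and the stated formula follows.

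\textbf{Step 2: uniqueness of the minimiser.} This is the main obstacle, and it is where the a.e.\ qualifier is needed. Rewrite \eqref{eq:p_sx} as a nearest-point problem. By \cref{lemma_alphae_com}, $\balpha_s(C)=\overline{\hat P(u_s)}=:C_s$. The minimisers $z$ of \eqref{eq:p_sx} correspond bijectively, via $z\mapsto\balpha_s(z)$, to the nearest points of $y=\balpha_s(x)$ in $C_s$. The function $\dist(\cdot,C_s)$ is $1$-Lipschitz, hence differentiable at a.e.\ $y$ by Rademacher's theorem. At any point $y\notin C_s$ where it is differentiable, the nearest point is unique. The standard argument: if $q$ is a nearest point, then $t\mapsto \dist(y+t(q-y),C_s)$ equals $(1-t)|y-q|$ for $t\in[0,1]$, which forces $\nabla\dist(y,C_s)=(y-q)/|y-q|$, so $q$ is determined by the gradient. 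Since $\balpha_s$ is bi-Lipschitz, it maps null sets to null sets. Pulling back the null set of non-differentiability points, we get uniqueness of $p_s(x)$ for a.e.\ $x$ in the good set from Step 1.

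\textbf{Step 3: handling the dependence on $s$.} For each fixed small $s$, the exceptional set depends on $s$, and the statement is indeed formulated for fixed $s$, so this suffices. I would also verify that $p_s(x)\neq x$, which holds because $x\notin C$; this keeps the denominator in the formula nonzero. Finally, I would remark that the case where $C$ reduces to $\partial\Omega$ is covered, since $\partial\Omega\neq\emptyset$, so $C$ is nonempty and the minimum is always attained.
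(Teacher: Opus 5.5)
Your proposal is correct and follows essentially the same route as the paper. You rewrite $m_{u_\varepsilon}(\balpha_\varepsilon(x))$ as a minimum of $|\balpha_\varepsilon(x)-\balpha_\varepsilon(z)|$ over the fixed set $\overline{\hat P(u)}$, discard the null set $\{u=0\}\cap\overline{P(u)}$ using \cref{ass:measure_Pu_zero}, and apply Danskin's theorem. You then get uniqueness of the minimiser from a.e.\ differentiability of the distance to $\balpha_s(\overline{\hat P(u)})$, pulled back through the bi-Lipschitz map $\balpha_s$. Your version is somewhat more self-contained, since you prove nearest-point uniqueness via the gradient and spell out the one-sided Danskin derivatives. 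One small correction: the lower bound $g(s,z)\geqslant c^{-1}|x-z|$ comes from the bi-Lipschitz property of $\balpha_s$ for small $|s|$, not from injectivity alone.
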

\begin{proof}
 We have
\[
    m_{u_{\varepsilon}}(\balpha_{\varepsilon}(x)) = \inf_{z \in \overline{\hat P(u_\varepsilon)}}|\balpha_{\varepsilon}(x) - z| = \inf_{z\in \overline{\hat P(u)}}|\balpha_{\varepsilon}(x) - \balpha_{\varepsilon}(z)|  =\inf_{z\in \overline{\hat P(u)}} g_x(\varepsilon, z),
\]
where \begin{equation}\label{LP}
    g_x(\varepsilon, z) = |\balpha_{\varepsilon}(x) - \balpha_{\varepsilon}(z)|.
\end{equation}
Let us consider from now on points $x$ with
\[x \in \{u=0\} \setminus (\partial  P(u) \cup M)\]
where $M$ is the null set on which $P(u)$ and $\{u > 0\}$ differ (indeed, we know these two sets are the same up to null sets, by \cref{lem:Pu_and_u_positive_same}).

We verify the hypotheses of Danskin's theorem
(see e.g.~\cite[Theorem 2.4]{MR4647544}):
\begin{enumerate}[itemsep=0pt, label=(\roman*)]
    \item \emph{Differentiability of the objective.} First, clearly $g_x$ is continuous. 
In addition, from \eqref{LP}, we know that $g_x$ is differentiable when $\balpha_{\varepsilon}(x) \neq  \balpha_{\varepsilon}(z)$, that is, if~$|\varepsilon|$ is small enough, when $x \neq z$, with
    \[
        \partial_{\varepsilon} g_x(\varepsilon, z)
        =
        \frac{\balpha_{\varepsilon}(x)-\balpha_{\varepsilon}(z)}{|\balpha_{\varepsilon}(x)-\balpha_{\varepsilon}(z)|}
        \cdot(\bphi(x)-\bphi(z)).
    \] 

    Since $x\in\{u=0\}\setminus M$, $x\notin P(u)$; as also $x\notin\partial P(u)$ by hypothesis, $x\notin\overline{P(u)}$. Since $x\in\{u=0\}\subset\Omega$, $x\notin\partial\Omega$. Hence $x\notin\overline{P(u)}\cup\partial\Omega=\overline{\hat P(u)}$, so $x$ cannot equal any $z\in\overline{\hat P(u)}$.
    
    Hence for all $x$ as considered, $g_x$ is differentiable with respect to $\varepsilon$, with a continuous derivative.

    \item \emph{Uniqueness of the minimiser.} 
Now let us show that the set
\[ \left\{z \in \argmin\limits_{z \in \overline{\hat P(u)}} g_x(\varepsilon, z)\right\}\]
is a singleton. 

To this end, we point out that 
    since $\dist(\cdot,\overline{\hat P(u_\varepsilon)})$ is Lipschitz, it is differentiable outside a set $N_\varepsilon$ of null measure, and hence (see e.g. \cite[Proposition 2.1]{borrelli2026submanifoldsclassc1alphasets}) there exists a unique nearest point in $\overline{\hat P(u_\varepsilon)}$ to $y$, for all $y \in \overline{\hat P(u_\varepsilon)}^c \cap N_\varepsilon^c$.  %
That is, the projection operator is well defined for such points,
and for all $ y \in \overline{\hat P(u_\varepsilon)}^c \cap N_\varepsilon^c$
there exists a unique $ y_\varepsilon^* $ for which \[ \dist(y,\overline{\hat P(u_\varepsilon)}) = |y-y_\varepsilon^*| .\] %
Now, if $x \in \{u=0\} \setminus \partial P(u)$ then $\balpha_\varepsilon(x) \in \{u_\varepsilon=0\}\setminus \partial P(u_\varepsilon)$, thus $\balpha_\varepsilon(x) \not\in \overline{P(u_\varepsilon)}.$ 

Since $\balpha_\varepsilon(x) \in \{u_\varepsilon = 0\} \subset \Omega$, $\balpha_\varepsilon(x) \not\in \partial \Omega$. Thus $\balpha_\varepsilon(x) \in \overline{P(u_\varepsilon)}^c \cap \partial\Omega^c = (\overline{P(u_\varepsilon)} \cup \partial\Omega)^c = \overline{\hat P(u_\varepsilon)}^c$. 

Also, since $\balpha_\varepsilon$ and its inverse function send null sets into null sets, it follows that $\balpha_\varepsilon(x) \not\in N_\varepsilon$ for $x \in \{u=0\}\setminus (\partial P(u) \cup \balpha_\varepsilon^{-1}N_\varepsilon)$. 

In this way, applying the above displayed equation, we have that $\dist(\balpha_\varepsilon(x),\overline{\hat P(u_\varepsilon)})$ is achieved by a unique element $q(x) \in  \overline{\hat P(u_\varepsilon)}$, with
\[\dist(\balpha_\varepsilon(x),\overline{\hat P(u_\varepsilon)}) = |\balpha_\varepsilon(x) - q(x)|.\]

Thus,
\begin{align*}&
\inf_{y \in \overline{\hat P(u)}} |\balpha_\varepsilon(x)-\balpha_\varepsilon(y)| = \inf_{y \in \overline{\hat P(u_\varepsilon)}} |\balpha_\varepsilon(x)-y| \\&\quad= \dist(\balpha_\varepsilon(x), \overline{\hat P(u_\varepsilon)}) = |\balpha_\varepsilon(x)- q(x)| = |\balpha_\varepsilon(x)- \balpha_\varepsilon p_\varepsilon(x)|,
\end{align*}
where we set 
\[p_\varepsilon(x) := \balpha_\varepsilon^{-1}q(x) \in \overline{\hat P(u)}.\]
That is, $p_\varepsilon(x)$ is a minimiser of the functional on the left-hand side. It is unique: if it were not unique, say also $p^* \neq p_\varepsilon(x)$ is a minimiser, we would have
\begin{align*}
|\balpha_\varepsilon(x)-\balpha_{\varepsilon}(p^*)| &= \inf_{y \in \overline{\hat P(u)}} |\balpha_\varepsilon(x)-\balpha_\varepsilon(y)|\\
& = \inf_{y \in \overline{\hat P(u_\varepsilon)}} |\balpha_\varepsilon(x)-y| = |\balpha_\varepsilon(x)- q(x)| 
\end{align*}
which shows that $\balpha_\varepsilon(p^*)$ and $q(x)$ are minimisers of the functional over $\overline{\hat P(u_\varepsilon)}$. By uniqueness of the projection at the point $\balpha_\varepsilon(x)$, we must have $q(x)=\balpha_\varepsilon(p^*)$ but $q(x)=\balpha_\varepsilon p_\varepsilon(x)$ by definition, which implies that $p_\varepsilon(x) = p^*$ as $\balpha_\varepsilon$ is a diffeomorphism, a contradiction. Thus we have shown that
\[ \inf_{y \in \overline{\hat P(u)}} g_x(\varepsilon, y)\]
has the unique minimiser $p_\varepsilon(x)$, for all $x \in \{u=0\}\setminus (\partial P(u)\cup \balpha_\varepsilon^{-1}N_\varepsilon)$.

\end{enumerate}

Danskin's theorem \cite[Theorem 2.4]{MR4647544}
thus gives that
\[
    \frac{d}{d\varepsilon}m_{u_\varepsilon}(\balpha_\varepsilon(x))
    = \partial_{\varepsilon} g_x(\varepsilon, p_\varepsilon(x))
    = \frac{\balpha_\varepsilon(x)-\balpha_\varepsilon(p_\varepsilon(x))}{|\balpha_\varepsilon(x)
    -\balpha_\varepsilon(p_\varepsilon(x))|}
    \cdot(\bphi(x)-\bphi(p_\varepsilon(x)))
\]
for all $x \in \{u=0\}\setminus (\partial P(u)\cup M \cup \balpha_\varepsilon^{-1}N_\varepsilon)$.

\end{proof}
We point out that, at $\varepsilon=0$, the minimiser $p_0(x)=p(x)$ of \eqref{eq:p_sx} is the nearest point in 
$\overline{\hat P(u)}$ to $x$, and $\frac{x-p(x)}{|x-p(x)|}=\nabla m_u(x)$ by the 
standard formula for the gradient of the distance function. Hence we have
\[
    \frac{d}{d\varepsilon}m_{u_\varepsilon}(\balpha_\varepsilon(x))\Big|_{\varepsilon=0}
    = \nabla m_u(x)\cdot(\bphi(x)-\bphi(p(x))). 
\]
where
\begin{equation}
p(x) := \argmin_{z \in \overline{\hat P(u)}} |x-z|.  \label{eq:p0x}    
\end{equation}
Now consider the map $\varepsilon \mapsto \BJ(\varepsilon) := \Jd(u_\varepsilon)$ as a function from $[-1,1]$ to $\mathbb{R}$:
$$\BJ(\varepsilon) = -\int_{\{u_\varepsilon = 0\}} (F \circ m_{u_\varepsilon})(x)\dx = -\int_{\{u=0\}} (F \circ m_{u_\varepsilon} \circ \balpha_\varepsilon) {{\rm det}} \nabla \balpha_\varepsilon \dx.$$   
\begin{prop}\label{DWLYUAS}
Let \cref{ass:measure_Pu_zero} hold for a minimiser $u$ of $J$.
The map $\BJ$ is differentiable at $0$ and
\begin{equation}\label{eq:BJ-prime}
\BJ'(0)
=
-\int_{\{u=0\}}
(F'\circ m_u)\,\nabla m_u\cdot(\bphi(x)-\bphi(p(x)))
+
(F\circ m_u)\,\grad \cdot \bphi(x)
 \dx
\end{equation}
where $p(x)$ is as in \eqref{eq:p0x}.
\end{prop}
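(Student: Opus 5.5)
We differentiate under the integral sign in the pulled-back representation
\[
\BJ(\varepsilon) = -\int_{\{u=0\}} (F\circ m_{u_\varepsilon}\circ\balpha_\varepsilon)\,\det\nabla\balpha_\varepsilon \dx .
\]
This form is convenient because the domain of integration $\{u=0\}$ no longer depends on $\varepsilon$. We write the difference quotient at $0$ as
\[
\frac{\BJ(\varepsilon)-\BJ(0)}{\varepsilon}
= -\int_{\{u=0\}} \frac{F(m_{u_\varepsilon}(\balpha_\varepsilon(x)))-F(m_u(x))}{\varepsilon}\,\det\nabla\balpha_\varepsilon \dx
\;-\;\int_{\{u=0\}} F(m_u(x))\,\frac{\det\nabla\balpha_\varepsilon-1}{\varepsilon}\dx ,
\]
and treat the two terms separately.

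\textbf{Second term.} Since $\det\nabla\balpha_\varepsilon = 1+\varepsilon\,\divop\bphi+O(\varepsilon^2)$ uniformly in $x$ (as $\bphi\in C^1_0$), and $F\circ m_u\leqslant F(\diam{\Omega})$ is bounded, this term converges to $-\int_{\{u=0\}}(F\circ m_u)\,\divop\bphi\dx$.

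\textbf{First term, pointwise limit.} By \cref{lem:pointwise_derivative} with $s=0$, for a.e.\ $x\in\{u=0\}$ the map $\varepsilon\mapsto m_{u_\varepsilon}(\balpha_\varepsilon(x))$ is differentiable at $0$ with derivative $\nabla m_u(x)\cdot(\bphi(x)-\bphi(p(x)))$; this uses \cref{ass:measure_Pu_zero} to discard $\partial P(u)$. Where $F$ is differentiable at $m_u(x)$, the chain rule gives pointwise convergence of the quotient to $(F'\circ m_u)\,\nabla m_u\cdot(\bphi(x)-\bphi(p(x)))$, and $\det\nabla\balpha_\varepsilon\to1$ uniformly.

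\textbf{First term, domination.} A uniform bound comes from Lipschitz estimates. By \cref{lemma_alphae_com},
\[
m_{u_\varepsilon}(\balpha_\varepsilon(x))=\inf_{z\in\overline{\hat P(u)}}|\balpha_\varepsilon(x)-\balpha_\varepsilon(z)| ,
\]
and $\bigl||\balpha_\varepsilon(x)-\balpha_\varepsilon(z)|-|x-z|\bigr|\leqslant |\varepsilon|\,\|\nabla\bphi\|_\infty|x-z|$. Hence $|m_{u_\varepsilon}(\balpha_\varepsilon(x))-m_u(x)|\leqslant |\varepsilon|\,\|\nabla\bphi\|_\infty\diam{\Omega}$. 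Since $F$ is Lipschitz with constant $L_F$, the quotient is bounded by $L_F\|\nabla\bphi\|_\infty\diam{\Omega}\,\sup|\det\nabla\balpha_\varepsilon|$. Dominated convergence on the bounded set $\Omega$ then yields the claimed formula~\eqref{eq:BJ-prime}.

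\textbf{Main obstacle.} The delicate step is the chain rule for $F$, which is only Lipschitz and hence differentiable merely a.e.\ on $\mathbb{R}_+$. We need $F$ to be differentiable at $m_u(x)$ for a.e.\ $x\in\{u=0\}$. This can fail if $m_u$ takes a value $t$ in the null set of non-differentiability points of $F$ on a set of positive measure, that is, on a level set $\{m_u=t\}$ of positive measure.

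Since $m_u$ is Lipschitz, $\nabla m_u=0$ a.e.\ on any such level set. But $|\nabla m_u|=1$ a.e.\ off $\overline{\hat P(u)}$, and a.e.\ point of $\{u=0\}$ lies off $\overline{\hat P(u)}$ by \cref{ass:measure_Pu_zero}. By the coarea formula (or the standard fact $\nabla m_u=0$ a.e.\ on level sets), $\{x\in\{u=0\}: m_u(x)\in N_F\}$ is therefore null whenever $|N_F|=0$.

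On that null set the limit is immaterial. Since the integrand carries the factor $\nabla m_u$, the expression $(F'\circ m_u)\nabla m_u$ is well defined a.e., with the convention that its value on the null set is irrelevant. With this in place the dominated convergence argument closes.
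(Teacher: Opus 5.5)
Your proposal is correct and follows the paper's proof essentially step by step. It uses the same pulled-back integral over the fixed set $\{u=0\}$ and the same splitting of the difference quotient. Dominated convergence rests on the same uniform Lipschitz bound on $m_{u_\varepsilon}(\balpha_\varepsilon(x))-m_u(x)$ via the inf--sup comparison. The pointwise limit comes from \cref{lem:pointwise_derivative} at $s=0$, and the same coarea argument (using $|\nabla m_u|=1$ a.e.\ on $\{u=0\}$ under \cref{ass:measure_Pu_zero}) ensures that $F'(m_u(x))$ exists for a.e.\ $x$. The only cosmetic difference is that you bound $|\bphi(x)-\bphi(z)|$ by $\|\nabla\bphi\|_\infty\,\mathrm{diam}(\Omega)$, where the paper uses $2\|\bphi\|_{L^\infty}$.
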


\begin{proof}
Write
\[
\BJ(\varepsilon)
=
-\int_{\{u=0\}} G_\varepsilon(x)\dx,
\quad
G_\varepsilon(x)
=
F(m_{u_\varepsilon}(\balpha_\varepsilon(x)))\det D\balpha_\varepsilon(x).
\]
We apply the dominated convergence theorem to differentiate under the integral sign.
We bound the difference quotient directly:
\begin{align*}
    \left|\frac{G_\varepsilon(x)-G_0(x)}{\varepsilon}\right|
    &\leqslant \left|\frac{F(m_{u_\varepsilon}(\balpha_\varepsilon(x))) - 
          F(m_u(x))}{\varepsilon}\right| |\det D\balpha_\varepsilon|
     + |F(m_u(x))| \left|\frac{\det D\balpha_\varepsilon - 1}{\varepsilon}\right|.
\end{align*} For the second term, since $D\balpha_\varepsilon = I + \varepsilon D\bphi$, the map 
$\varepsilon \mapsto \det(I+\varepsilon D\bphi(x))$ is a polynomial in $\varepsilon$ 
of degree $d$. Hence $\det(I+\varepsilon D\bphi(x)) - 1$ has no constant term, so 
$\frac{\det(I+\varepsilon D\bphi(x))-1}{\varepsilon}$ is itself a polynomial in 
$\varepsilon$ with coefficients depending only on $D\bphi(x)$, and is therefore 
bounded uniformly in $x$ and small $\varepsilon$ by a constant depending only on 
$\|D\bphi\|_{L^\infty}$. For the first term, we observe that, for all functions $f$ and $g$ and $\eta>0$,
\begin{eqnarray*}
&& \inf_{z\in\overline{\hat P(u)}} |f(z)|-\inf_{z\in\overline{\hat P(u)}}| g(z)|=
\inf_{z\in\overline{\hat P(u)}} |f(z)|-\inf_{w\in\overline{\hat P(u)}}| g(w)|
\\&&\qquad\leqslant |f(w_\eta)|-|g(w_\eta)|+\eta
\leqslant |f(w_\eta)-g(w_\eta)|+\eta\leqslant\sup_{z\in\overline{\hat P(u)}} |f(z)-g(z)|+\eta,
\end{eqnarray*}
for a suitable $w_\eta\in\overline{\hat P(u)}$.

Thus, taking~$\eta$ as small as we wish and possibly exchanging the roles of~$f$ and $g$,
$$\left| \inf_{z\in\overline{\hat P(u)}} |f(z)|-\inf_{z\in\overline{\hat P(u)}}| g(z)|\right|\leqslant \sup_{z\in\overline{\hat P(u)}} |f(z)-g(z)|.$$
Using this observation and the fact that $\hat P(u_\varepsilon) = \balpha_\varepsilon(\hat P(u))$, we find that
\begin{align*}
    |m_{u_\varepsilon}(\balpha_\varepsilon(x)) - m_u(x)|
    &= \left|\inf_{z\in\overline{\hat P(u)}}|\balpha_\varepsilon(x)-\balpha_\varepsilon(z)| 
       - \inf_{z\in\overline{\hat P(u)}}|x-z|\right| \\
    &\leqslant \sup_{z\in\overline{\hat P(u)}} \left||\balpha_\varepsilon(x)-\balpha_\varepsilon(z)| 
       - |x-z|\right| \\
    &\leqslant \sup_{z\in\overline{\hat P(u)}} |\varepsilon(\bphi(x)-\bphi(z))|\\&
    \leqslant 2\|\bphi\|_{L^\infty}|\varepsilon|.
\end{align*}
Since $F$ is Lipschitz on $[0,\diam{\Omega}]$ with constant 
$\|F'\|_{L^\infty([0,\diam{\Omega}])}$, this gives
\[
    \left|\frac{F(m_{u_\varepsilon}(\balpha_\varepsilon(x))) - 
    F(m_u(x))}{\varepsilon}\right| 
    \leqslant \|F'\|_{L^\infty} \cdot 2\|\bphi\|_{L^\infty},
\]
uniformly in $x$ and $\varepsilon$. 

Thus, since $|\det D\balpha_\varepsilon|$ is also 
uniformly bounded, 
\[
    \left|\frac{G_\varepsilon(x)-G_0(x)}{\varepsilon}\right| \leqslant C
\]
for a constant $C$ independent of $x$ and small $\varepsilon$. Since $|\Omega|<\infty$, 
this is an $L^1(\{u=0\})$ dominating function, and the dominated convergence theorem gives
\[
\BJ'(0)
=
-\int_{\{u=0\}} \partial_\varepsilon G_\varepsilon(x)\big|_{\varepsilon=0}\dx.
\]
It remains to compute $\partial_\varepsilon G_\varepsilon(x)|_{\varepsilon=0}$. 
By \cref{lem:pointwise_derivative} at $\varepsilon=0$,
\[
\frac{d}{d\varepsilon} m_{u_\varepsilon}(\balpha_\varepsilon(x))\Big|_{\varepsilon=0}
=
\nabla m_u(x)\cdot(\bphi(x)-\bphi(p(x)))
\]
for a.e.\ $x\in\{u=0\}$. Using also $\det D\balpha_0 = 1$ and 
$\partial_\varepsilon \det D\balpha_\varepsilon|_{\varepsilon=0} = \operatorname{div}\bphi$, 
we obtain
\[
\partial_\varepsilon G_\varepsilon(x)\big|_{\varepsilon=0}
=
(F'\circ m_u)\,\nabla m_u\cdot(\bphi(x)-\bphi(p(x)))
+
(F\circ m_u)\,\operatorname{div} \bphi,
\]
and substituting gives \eqref{eq:BJ-prime}. In this computation, we used the fact that $F'(m_u(x))$ exists for a.e. $x\in\{u=0\}$. Indeed, as mentioned before,  $F'$ exists for every point outside a null set $N$. Now, using that $|\nabla m_u(x)| = 1$ on $\{u=0\}$ (thanks to \cref{ass:measure_Pu_zero} and because $|\nabla m_u| = 1$ a.e. on $\mathbb{R}^d \setminus \overline{\hat P(u)}$ by e.g. \cite[Remark 2.1]{borrelli2026submanifoldsclassc1alphasets}) and applying the coarea formula,
\begin{align*}
|\{x\in\{u=0\}: m_u(x)\in N\}|  &= 
\int_{\{u=0\}} \chi_N(m_u(x))\,|\nabla m_u(x)|\dx\\
&= \int_{-\infty}^\infty \left(\int_{\{u=0\}\cap m_u^{-1}(t)} \chi_N(m_u(x))\d\mathcal H^{d-1}(x)\right) \d t\\
&=  \int_{-\infty}^\infty \chi_N(t)\left(\int_{\{u=0\}\cap m_u^{-1}(t)} \d\mathcal H^{d-1}(x)\right) \d t\\
& = 0
\end{align*}
since $N$ is a null set. Hence for almost all $x$, $F'(m_u(x))$ is well defined.
\end{proof}

 In the next result, note that the integral over $\{u>0\}$ on the left-hand side can be written equivalently over $\Omega$, due to Stampacchia's lemma.
\begin{prop}[First domain variation formula]\label{lem:FDVFormula}
Let \cref{ass:measure_Pu_zero} hold  for a minimiser $u$ of $J$.
Then for every $\bphi \in C_0^\infty(\Omega, \mathbb{R}^d)$ the following identity holds: \begin{align}
        &\int_{\{u>0\}} \left( \frac 12|\nabla u|^2{\divop}\bphi - \nabla u \cdot \nabla \bphi \nabla u + f \nabla u \cdot \bphi  \right)\dx \nonumber \\ 
        &=  \int_{\{u=0\}} \left((F' \circ m_u) \nabla m_u \cdot \left[\bphi - \bphi(p(x))\right] + (F \circ m_u) {\divop} \bphi \right)\dx\label{first_var_cn2}  
    \end{align} 
    where %
    $p(x)$ is as in \eqref{eq:p0x}.
\end{prop}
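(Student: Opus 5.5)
The plan is to use inner variations. Since $u$ minimises $J = D + \Jd$ over ${K}$, I test against the competitors $u_\varepsilon$ defined by \eqref{eq:uepsilondefn}, with $\balpha_\varepsilon(x) = x+\varepsilon\bphi(x)$ as in \eqref{eq:alphaepsphieps}. Because $\bphi$ has compact support in $\Omega$, $\balpha_\varepsilon$ is the identity near $\partial\Omega$ for small $|\varepsilon|$. Hence $u_\varepsilon\geqslant 0$, $u_\varepsilon\in H^1(\Omega)$, and $u_\varepsilon-u_0\in H^1_0(\Omega)$, so $u_\varepsilon\in{K}$. The scalar function $\varepsilon\mapsto J(u_\varepsilon)=D(u_\varepsilon)+\BJ(\varepsilon)$ therefore attains a minimum at $\varepsilon=0$. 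If both pieces are differentiable at $0$, taking $\varepsilon\to 0^\pm$ in the difference quotients gives $\frac{d}{d\varepsilon}D(u_\varepsilon)|_{\varepsilon=0}+\BJ'(0)=0$. \cref{DWLYUAS} already supplies $\BJ'(0)$ in the form \eqref{eq:BJ-prime}, so it remains to compute the derivative of the Dirichlet part.

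For $D(u_\varepsilon)$ I change variables $y=\balpha_\varepsilon(x)$. Since $\nabla u_\varepsilon(\balpha_\varepsilon(x)) = (D\balpha_\varepsilon(x))^{-T}\nabla u(x)$, this gives
\[
D(u_\varepsilon)=\int_\Omega \Big(\tfrac12 |(I+\varepsilon D\bphi)^{-T}\nabla u|^2 - f(\balpha_\varepsilon(x))u(x)\Big)\det(I+\varepsilon D\bphi)\dx .
\]
The gradient term is differentiated using the expansions $(I+\varepsilon D\bphi)^{-T}=I-\varepsilon D\bphi^T+O(\varepsilon^2)$ and $\det(I+\varepsilon D\bphi)=1+\varepsilon\divop\bphi+O(\varepsilon^2)$, with $O(\varepsilon^2)$ uniform in $x$. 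Dominated convergence with majorant $C|\nabla u|^2$ then yields $\int \frac12|\nabla u|^2\divop\bphi-\nabla u\cdot\nabla\bphi\,\nabla u$. The forcing term is the delicate one, since $f$ is only $L^2$ and $f\circ\balpha_\varepsilon$ cannot be differentiated pointwise. I will therefore use the other change of variables, writing $\int f u_\varepsilon\,\mathrm{d}y$ with $u_\varepsilon(y)=u(\balpha_\varepsilon^{-1}(y))$. Then
\[
\frac{u_\varepsilon-u}{\varepsilon}\to -\nabla u\cdot\bphi \quad\text{in } L^2(\Omega).
\]
This holds for smooth $u$ by Taylor expansion. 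For general $u$ it follows by density, using the uniform bound $\|(u_\varepsilon-u)/\varepsilon\|_{L^2}\leqslant C\|\nabla u\|_{L^2}\|\bphi\|_\infty$, obtained from $u\circ\balpha_\varepsilon^{-1}-u=\int_0^1\frac{d}{dt}(\cdots)$ along segments. The forcing term therefore contributes $+\int f\nabla u\cdot\bphi$ to the derivative of $D$.

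Adding the two contributions gives
\[
\int_\Omega \Big(\tfrac12|\nabla u|^2\divop\bphi-\nabla u\cdot\nabla\bphi\,\nabla u+f\nabla u\cdot\bphi\Big) = -\BJ'(0).
\]
Substituting \eqref{eq:BJ-prime} yields exactly \eqref{first_var_cn2}. The only remaining point is to restrict the left-hand integral to $\{u>0\}$, and this follows from Stampacchia's lemma: $\nabla u=0$ a.e. on $\{u=0\}$, so every integrand on the left vanishes there.

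I expect the main obstacle to be justifying the differentiability of $D(u_\varepsilon)$ at $0$ with only $f\in L^2$ and $u\in H^1$, specifically the $L^2$ convergence of the difference quotient of $u\circ\balpha_\varepsilon^{-1}$. The derivative of the non-local term has already been secured in \cref{DWLYUAS}, and that is where \cref{ass:measure_Pu_zero} enters. The remaining care is a sign check: the minimum at $0$ makes the sum of the two-sided derivatives vanish, and this is what moves $\BJ'(0)$ to the right-hand side with the positive sign appearing in \eqref{first_var_cn2}.
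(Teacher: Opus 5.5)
Your proposal is correct and follows the paper's own route: the inner variation $u_\varepsilon=u\circ\balpha_\varepsilon^{-1}$, a change of variables with first-order expansions of $(D\balpha_\varepsilon)^{-T}$ and $\det D\balpha_\varepsilon$ for the Dirichlet part, \cref{DWLYUAS} for $\BJ'(0)$, and setting the two-sided derivative to zero at the minimum. You are in fact more careful than the paper on two points: you check that $u_\varepsilon\in K$, and you justify the forcing term through the $L^2$ convergence $(u_\varepsilon-u)/\varepsilon\to-\nabla u\cdot\bphi$, whereas the paper only formally expands $(f\circ\balpha_\varepsilon)\,u\det\nabla\balpha_\varepsilon$ even though $f$ is merely $L^2$.
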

\begin{proof}
As $\varepsilon \mapsto \Jfull(u_\varepsilon)$ is minimal at $\varepsilon =0$, we have $\frac{d}{d\varepsilon}\Jfull(u_\varepsilon)|_{\varepsilon=0}=0$. We now work this out explicitly. Recalling again $\balpha_\varepsilon$ and $u_\varepsilon$ from \eqref{eq:alphaepsphieps} and \eqref{eq:uepsilondefn}, we have
    \begin{align}
         &D(u_\varepsilon) - D(u) \nonumber \\ 
        &= \int_{\{u>0\}}\left[\frac 12|(\nabla \balpha_\varepsilon^\top)^{-1}\nabla u|^2  - (f \circ \balpha_\varepsilon) u \right] {{\rm det}} \nabla \balpha_\varepsilon \dx  -  \int_{\{u>0\}}\left[\frac 12|\nabla u|^2 - fu\right] \dx. \label{e0}
    \end{align}
    From $u_\varepsilon \circ \balpha_\varepsilon = u$ we have $\nabla \balpha_\varepsilon^\top\nabla u_\varepsilon = \nabla u$ whereby $\nabla u_\varepsilon = (\nabla \balpha_\varepsilon^\top)^{-1}\nabla u $. 
    
    We also have that $\nabla \balpha_\varepsilon = I + \varepsilon \nabla \bphi$ and $(\nabla \balpha_\varepsilon)^{-1} = I - \varepsilon \nabla \bphi + o(\varepsilon)$, hence $(\nabla \balpha_\varepsilon^\top)^{-1} = I - \varepsilon \nabla \bphi^\top + o(\varepsilon)$.

    Thus,
    $$(\nabla \balpha_\varepsilon^\top)^{-1}\nabla u = (I-\varepsilon \nabla \bphi^\top + o(\varepsilon))\nabla u = \nabla u - \varepsilon \nabla \bphi^\top \nabla u + o(\varepsilon)$$and$$|(\nabla \balpha_\varepsilon^\top)^{-1} \nabla u|^2 = |\nabla u|^2 - 2 \varepsilon \nabla u \cdot \nabla \bphi \nabla u + o(\varepsilon),$$
while $${{\rm det}} \nabla \balpha_\varepsilon = 1 + \varepsilon {\divop} \bphi + o(\varepsilon).
   $$
   Hence \eqref{e0} becomes  
   \begin{align*} 
        &D(u_\varepsilon) - D(u)  = \varepsilon \int_{\{u>0\}} \left[ \frac 12|\nabla u|^2 {\divop}\bphi - \nabla u \cdot \nabla \bphi \nabla u  \right]\dx  +  \varepsilon \int_{\{u>0\}}  f \nabla u \cdot \bphi  \dx + o(\varepsilon),
    \end{align*} whereby 
    \begin{align*}
        \dfrac{\partial}{\partial\varepsilon}\left[D(u_\varepsilon)\right]\biggr\rvert_{\varepsilon = 0} =  \int_{\{u>0\}} \left[\frac 12|\nabla u|^2 {\divop}\bphi - \nabla u \cdot \nabla \bphi \nabla u \right]\dx + \int_{\{u>0\}} f \nabla u \cdot \bphi \dx.
    \end{align*}
  The desired result now follows from \cref{DWLYUAS}.
    \end{proof}

\begin{proof}[Proof of \cref{thm:fbc}]
    \cref{prop:pde_solved_on_Pu} implies $-\Delta u = f$ a.e. on $\{u > 0\}$, which gives
    \begin{align*}
        \frac 12|\nabla u|^2{\divop}\bphi  - \nabla u \cdot \nabla \bphi \nabla u +f \nabla u \cdot \bphi &= \frac 12|\nabla u|^2{\divop}\bphi - \nabla u \cdot \nabla \bphi \nabla u - (\Delta u) \nabla u \cdot \bphi \\ &= \divop \left(\frac 12|\nabla u|^2\bphi-(\nabla u \cdot \bphi)\nabla u\right).
    \end{align*}
Hence, using \cref{lem:FDVFormula},
\begin{eqnarray*}&&
       \int_{\{u > 0\}} \divop \left(\frac 12|\nabla u|^2\bphi-(\nabla u \cdot \bphi)\nabla u\right) \\&&\qquad =\int_{\{u=0\}} \left[(F' \circ m_u) \nabla m_u \cdot \left[\bphi - \bphi(x_p)\right] + (F \circ m_u) {\divop} \bphi \right]\dx.
    \end{eqnarray*}
   Now, the left-hand side is
\begin{align*}
       \int_{\{u > 0\}} \divop \left(\frac 12|\nabla u|^2\bphi-(\nabla u \cdot \bphi)\nabla u\right) &= 
       \lim_{\varepsilon \downarrow 0}\int_{\{u > \varepsilon\}} \divop \left(\frac 12|\nabla u|^2\bphi-(\nabla u \cdot \bphi)\nabla u\right)\\
       &= \lim_{\varepsilon \downarrow 0}\int_{\partial\{u > \varepsilon\}} \left(\frac 12|\nabla u|^2\bphi-(\nabla u \cdot \bphi)\nabla u\right)\cdot \nu_\varepsilon\\
       &= \lim_{\varepsilon \downarrow 0}\int_{\partial\{u > \varepsilon\}} - \frac 12|\nabla u|^2\bphi\cdot \nu_\varepsilon
       \end{align*}
       where we used the expression for the outward normal $\nu_\varepsilon = -\frac{\grad u}{|\grad u|}$ since $\partial\{u > \varepsilon\} = \{u = \varepsilon\}$. Combining the previous two displayed equations gives the first equality in \eqref{fb_0}. 

    For the  second, recall the product rule: ${\divop}(\psi\Psi) = \psi {\divop}(\Psi) + \nabla \psi \cdot \Psi.$ We apply this for $\psi = F \circ m_u$ and $\Psi(x) = \bphi(x)$, finding that \begin{align*}
        (F\circ m_u){\divop}( \bphi) + (F' \circ m_u)\nabla m_u \cdot \bphi = {\divop} \left( (F\circ m_u) \bphi\right).
    \end{align*}
Then we have, using the decomposition $\Omega = \{ u = 0\} \cup \{ u > 0\}$,
\begin{align*}
\int_{\{u=0\}} \divop \left( (F\circ m_u) \bphi\right) &= \int_\Omega \divop \left( (F\circ m_u) \bphi\right)  - \lim_{\varepsilon \downarrow 0}\int_{\{u > \varepsilon\}}  \divop\left( (F\circ m_u) \bphi\right) \\
&= - \lim_{\varepsilon \downarrow 0}\int_{\partial\{u > \varepsilon\}}  (F\circ m_u) \bphi\cdot \nu_\varepsilon
\end{align*}
with the final equality as $\bphi$ is compactly supported.
    
The second formula in \eqref{fb_0} then follows.
\end{proof} 

\begin{remark}
We assumed $C^1$-smoothness of $\partial\{u > \varepsilon\}$ in order to use the divergence theorem and to approximate the positivity set. This assumption could be weakened; however, it is standard in the literature.
\end{remark}

 We conclude now with proving the pointwise free boundary conditions in a one-dimensional setting.
\begin{proof}[Proof of \cref{cor:1D_FB}]
Write
\[
Z(u)
=
(0,1)\setminus\bigcup_{i=1}^k(a_i,b_i).
\]
Let $(c,d)$ represent a non-empty zero-gap of $Z(u)$ (that is, a maximal closed or half-closed interval contained in $Z(u)$), so that
$c,d\in\{0,1,a_i,b_i\}_{i=1}^k$ and $c < d$. By \cref{lem:Z_reduction}, for $x \in (c,d)$,
\[
m_u(x)=\dist(x,\partial Z(u))
=
\min(x-c,d-x),
\]
For
$x\in(c,(c+d)/2)$,
\[
m_u(x)=x-c,\qquad
m_u'(x)=1,\qquad
x_p=c.
\]
Hence, taking $\bphi\in C_0^1(\Omega,\mathbb R)$,
\[
\frac{d}{dx}
\left(
F(x-c)(\bphi(x)-\bphi(c))
\right)
=
(F'\circ m_u)m_u'
(\bphi-\bphi(x_p))
+
(F\circ m_u)\bphi',
\]
and therefore
\[
\int_c^{(c+d)/2}
(F'\circ m_u)m_u'
(\bphi-\bphi(x_p))
+
(F\circ m_u)\bphi'
\dx
=
F\left(\frac{d-c}{2}\right)
\left(
\bphi\left(\frac{c+d}{2}\right)-\bphi(c)
\right).
\]
Similarly, for
$x\in((c+d)/2,d)$,
\[
m_u(x)=d-x,\qquad
m_u'(x)=-1,\qquad
x_p=d.
\]
Consequently,
\[
\frac{d}{dx}
\left(
F(d-x)(\bphi(x)-\bphi(d))
\right)
=
(F'\circ m_u)m_u'
(\bphi-\bphi(x_p))
+
(F\circ m_u)\bphi',
\]
and so
\[
\int_{(c+d)/2}^{d}
(F'\circ m_u)m_u'
(\bphi-\bphi(x_p))
+
(F\circ m_u)\bphi'
\dx
=
-
F\left(\frac{d-c}{2}\right)
\left(
\bphi\left(\frac{c+d}{2}\right)-\bphi(d)
\right).
\]
Adding the above two integral identities gives
\begin{equation}\label{eq:1D_gap}
\int_c^d
(F'\circ m_u)m_u'
(\bphi-\bphi(x_p))
+
(F\circ m_u)\bphi'
\dx
=
F\left(\frac{d-c}{2}\right)
(\bphi(d)-\bphi(c)).
\end{equation}
Summing \eqref{eq:1D_gap} over all  zero-gaps $(c,d)$ of $Z(u)$ yields
\[
\int_{\{u=0\}}
(F'\circ m_u)m_u'
(\bphi-\bphi(x_p))
+
(F\circ m_u)\bphi'
\dx
=
\sum_{(c,d)}
F\left(\frac{d-c}{2}\right)
(\bphi(d)-\bphi(c)),
\]
Substituting this identity into the derived free boundary condition \eqref{fb_0} of \cref{thm:fbc}, which we recall is
\[
\lim_{\varepsilon\downarrow0}
\int_{\partial\{u>\varepsilon\}}
\frac12(u')^2 \bphi\nu
=
-
\int_{\{u=0\}}
\left(
(F'\circ m_u)m_u'
(\bphi-\bphi(x_p))
+
(F\circ m_u)\bphi'
\right)
\dx,
\]
we obtain
\[
\sum_{i=1}^k
\left(
-\frac12u'(a_i)^2\bphi(a_i)
+
\frac12u'(b_i)^2\bphi(b_i)
\right)
=
-
\sum_{(c,d)}
F\left(\frac{d-c}{2}\right)
(\bphi(d)-\bphi(c)).
\]
Since $\bphi\in C_0^1(\Omega,\mathbb R)$, we have
$\bphi(0)=\bphi(1)=0$, so the endpoints $0$ and $1$ do not contribute. As the free boundary points are distinct, we may choose test functions supported in arbitrarily small neighbourhoods of each one. 
Comparing the coefficients of the values of $\bphi$ at each free boundary point yields the desired statement.

\end{proof}

\section{Conclusion}
We studied an interesting and novel functional involving the distance to the free boundary term, and answered the most fundamental questions of interest. Further work includes deriving pointwise free boundary conditions in dimensions higher than $1$, and regularity of solutions, in particular, establishing continuity of solutions taking into account the difficulties posed by the non-local term.

\section*{Acknowledgements}
AA thanks Constantin Christof for useful comments on an earlier version which pointed out in particular the well-definedness issue.

\bibliographystyle{abbrv}   
\bibliography{bib}

\end{document}